\documentclass[review]{elsarticle}

\usepackage{lineno,hyperref}
\modulolinenumbers[5]

\journal{Journal of Number Theory}

\usepackage{amssymb,amsthm,amsmath,amsfonts,enumerate,stmaryrd}
\usepackage{fancyhdr}
\usepackage{mathtools} 
\usepackage{tikz-cd}
\usetikzlibrary{matrix}
\usepackage{dsfont}
\usepackage[makeroom]{cancel}
\usepackage{mathrsfs}
\usepackage{graphicx} 
\graphicspath{{figures/}} 

\usepackage{xcolor} 

\usepackage{amsmath} 
\usepackage{amssymb} 

\usepackage{booktabs} 
\usepackage{enumitem} 








\bibliographystyle{elsarticle-num}
\newenvironment{Sproof}{%
\proof}{\endproof}
\newtheorem{theorem}{Theorem}[section]

\theoremstyle{definition}

\newtheorem{proposition}[theorem]{Proposition}

\theoremstyle{remark}

\numberwithin{equation}{section}

\newcommand{\G}{\Gamma}

\newcommand{\C}{\mathbb{C}}

\newcommand{\Slz}{\text{SL}_2(\mathbb{Z})}

\begin{document}

\begin{frontmatter}

\title{Zeros of certain combinations of Eisenstein series of weight $2k, 3k,$ and $k+l$}

\author{Jetjaroen Klangwang}
\address{Department of Mathematics, Oregon State University, Corvallis, Oregon 97331}

\ead[url]{http://people.oregonstate.edu/$\sim$klangwaj/}

\ead{klangwaj@oregonstate.edu}


\begin{abstract}
We locate the zeros of the modular forms $E_k^2(\tau)  + E_{2k}(\tau), E_k^3(\tau) + E_{3k} (\tau),$ and $E_k(\tau)E_l(\tau) +E_{k+l}(\tau),$ where $E_k(\tau)$ is the Eisenstein series for the full modular group $\Slz$. By utilizing work of F.K.C. Rankin and Swinnerton-Dyer, we prove that for sufficiently large $k,l$, all zeros in the standard fundamental domain are located on the lower boundary $\mathcal{A} = \{ e^{i\theta} : \pi/2 \leq \theta \leq 2\pi/3\}$.
\end{abstract}

\begin{keyword}
Zeros of modular forms, Eisenstein series.
\MSC[2010] 11F03, 11F11.
\end{keyword}

\end{frontmatter}

\section{Introduction and Statement of results}

Let $\G = \Slz$. The full modular group $\G$ acts on the upper half plane $\mathbb{H} = \{ \tau \in \C : \text{Im}(\tau)> 0\}$ by fractional linear transformations. A standard fundamental domain of this action is given by 
	$$
	\mathcal{F}=\left\{ |\tau| >1 \; \text{and}\; 0< \text{Re}(\tau) < \frac{1}{2}\right\} \cup \left\{ |\tau| \geq 1 \; \text{and} \; -\frac{1}{2} \leq \text{Re}(\tau) \leq 0\right\}.
	$$

Let $k$ be an even integer. For $k \geq 2$, the classical (normalized) Eisenstein series of weight $k$ for $\G$ is defined by 
\begin{equation}\label{def:Eisensteinseries}
E_k(\tau) = \dfrac{1}{2} \sum_{\substack{c,d \in \mathbb{Z} \\ \gcd (c,d) =1}} (c\tau + d)^{-k}.
\end{equation}

The location of the zeros of Eisenstein series has been studied since  1960s. 
Wohlfahrt  \cite{wohlfahrt1963nullstellen} showed in 1963 that for $4 \leq k \leq 26$, all zeros in the fundamental domain $\mathcal{F}$ of $E_k(\tau)$ lie on the unit circle $|\tau|=1$ and conjectured that this holds for $k \geq 4$. The range of $k$ was extended to $4 \leq k \leq 34$, and $k=38$ by R.A. Rankin in \cite{rankin1969zeros}. Eventually, Wohlfahrt's conjecture was proved by R.A. Rankin's daughter, F. K. C. Rankin, together with Swinnerton-Dyer in their famous
paper \cite{rankin1970zeros}. 
		
The argument of F. K. C. Rankin and Swinnerton-Dyer has been generalized to Eisenstein series for different groups \cite{miezaki2007zeros,shigezumi2007zeros,garthwaitelevel22010zeros, hahn2007zeros},  other modular forms \cite{getz2004generalization, gun2006zeros}, and certain weakly holomorphic modular forms \cite{duke2008zeros, garthwaite2012zeros, haddock2014zeros}. 

Recently, Reitzes, Vulakh and Young \cite{reitzes2017zeros} showed that for $k \geq l \geq 14$, all zeros in the fundamental domain $\mathcal{F}$ of the cusp form $E_k(\tau) E_l(\tau) - E_{k+l}(\tau)$ are either located on the lower boundary or on the left side boundary $\{ \tau \in \mathcal{F} : \text{Re}(\tau) = -1/2\}$ of the standard fundamental domain. 

The aim of this paper is to generalize the approach of F.K.C. Rankin and Swinnerton-Dyer in \cite{rankin1970zeros} as well as techniques Reitzes et al. used in \cite{reitzes2017zeros} to show that for $n = 2, 3$, all zeros in the fundamental domain $\mathcal{F}$ of the modular forms of weight $nk$ defined by 
$$
E_k^n(z) + E_{nk}(z),
$$
and all zeros in the fundamental domain $\mathcal{F}$ of the modular form of weight $k +l$ defined by 
$$
E_k(\tau)E_l(\tau) + E_{k+l}(\tau)
$$
lie on the lower bound boundary. Let us now state our results. 

\begin{theorem}\label{theorem:2k3k}
Let $k$ be even. All zeros of $E_k^2(\tau) + E_{2k}(\tau)$ for $k \geq 10$ and all zeros of $E_k^3(\tau) + E_{3k}(\tau)$ for $k \geq 16$ in the fundamental domain $\mathcal{F}$ are located on the arc $\mathcal{A} = \{e^{i\theta} : \pi/2 \leq \theta \leq 2\pi/3\}$. 
\end{theorem}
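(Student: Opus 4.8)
The plan is to follow the strategy of F.~K.~C.~Rankin and Swinnerton-Dyer \cite{rankin1970zeros}. Set $F_n(\tau) = E_k^n(\tau) + E_{nk}(\tau)$, a holomorphic modular form of weight $nk$ with constant term $2$ at $\infty$; thus $F_n \not\equiv 0$ and $\mathrm{ord}_\infty F_n = 0$, so the valence formula reads
$$
\frac{\mathrm{ord}_i F_n}{2} + \frac{\mathrm{ord}_\rho F_n}{3} + \sum_{\substack{P \in \mathcal F \\ P \neq i, \rho}} \mathrm{ord}_P F_n = \frac{nk}{12}, \qquad \rho := e^{2\pi i/3}.
$$
All terms on the left are nonnegative, so it suffices to produce zeros of $F_n$ on the arc $\mathcal A$ whose weighted count already equals $nk/12$: then $F_n$ can have no zero in $\mathcal F \setminus \mathcal A$.

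The second step is to pass to the arc $\tau = e^{i\theta}$, $\pi/2 \leq \theta \leq 2\pi/3$. Combining $E_m(-1/\tau) = \tau^m E_m(\tau)$ with $E_m(-\overline{\tau}) = \overline{E_m(\tau)}$ (real Fourier coefficients) and the identity $-1/\tau = -\overline{\tau}$ on $|\tau| = 1$, one gets $e^{im\theta/2} E_m(e^{i\theta}) \in \R$. Hence
$$
g_n(\theta) := e^{ink\theta/2} F_n(e^{i\theta}) = \bigl(e^{ik\theta/2} E_k(e^{i\theta})\bigr)^n + e^{ink\theta/2} E_{nk}(e^{i\theta})
$$
is real-valued, and the zeros of $F_n$ on $\mathcal A$ are exactly the real zeros of $g_n$ on $[\pi/2, 2\pi/3]$. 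Inserting the Rankin--Swinnerton-Dyer expansion $e^{im\theta/2} E_m(e^{i\theta}) = 2\cos(m\theta/2) + R_m(\theta)$, in which $R_m$ gathers the terms of \eqref{def:Eisensteinseries} beyond $(c,d) \in \{(0,\pm1),(\pm1,0)\}$, and using $4\cos^2 x = 2 + 2\cos 2x$ and $8\cos^3 x = 6\cos x + 2\cos 3x$, I would write $g_n = M_n + \mathcal E_n$ with
$$
M_2(\theta) = 2 + 4\cos(k\theta), \qquad M_3(\theta) = 2\cos(k\theta/2)\bigl(8\cos^2(k\theta/2) - 3\bigr),
$$
and $|\mathcal E_2| \leq 4|R_k| + |R_k|^2 + |R_{2k}|$, with an analogous cubic bound on $|\mathcal E_3|$.

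The core is then a sign-change count for $g_n$ on $[\pi/2,2\pi/3]$. On this range every term absorbed into $R_m$ satisfies $|c e^{i\theta}+d| \geq \sqrt{2}$ except $(c,d) = \pm(1,1)$, which is harmless away from $\theta = 2\pi/3$; so $R_m$ is exponentially small in $k$ on any interval bounded away from the endpoint $2\pi/3$. I would then sample $M_n$ at its local extrema --- for $n = 2$ the points with $\cos(k\theta) = \pm 1$, where $M_2 \in \{-2, 6\}$; for $n = 3$ the points where $2\cos(k\theta/2)(8\cos^2(k\theta/2)-3)$ attains its extreme values --- and show that for $k \geq 10$ (resp.\ $k \geq 16$) the bound on $|\mathcal E_n|$ is strictly smaller than $|M_n|$ at every such point, so $\mathrm{sgn}\,g_n = \mathrm{sgn}\,M_n$ there. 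Since the signs alternate, the intermediate value theorem gives a zero of $g_n$ between consecutive samples; since $M_2$ has $\sim k/6$ sign changes and $M_3$ has $\sim k/4$ sign changes over an interval of length $\pi/6$, this accounts for $\lfloor k/6\rfloor$ (resp.\ $\lfloor k/4\rfloor$) interior zeros up to a bounded defect.

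Closing that defect --- which is localized at the two elliptic points $i = e^{i\pi/2}$ and $\rho = e^{2\pi i/3}$, here coinciding with the endpoints of $\mathcal A$ --- is where I expect the real difficulty. Near $\rho$ the term $(c,d) = \pm(1,1)$ has $|e^{i\theta}+1| \to 1$ and must be moved into an enlarged main term, as in \cite{rankin1970zeros,reitzes2017zeros}; near $i$ the main term $M_n$ can itself be small, so one instead needs the forced values of $\mathrm{ord}_i F_n$ and $\mathrm{ord}_\rho F_n$, which are read off from $\mathrm{ord}_i E_k$ and $\mathrm{ord}_\rho E_k$ (determined by $k \bmod 12$) together with whether the leading terms of $E_k^n$ and $E_{nk}$ cancel at these points. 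The point is to check that $\tfrac12\mathrm{ord}_i F_n + \tfrac13\mathrm{ord}_\rho F_n$ plus the interior count equals $nk/12$ on the nose, and that the $|\mathcal E_n|$ bounds still beat $|M_n|$ inside the shrinking neighborhoods of $i$ and $\rho$; this is what forces the thresholds $k \geq 10$ and $k \geq 16$, the finitely many smaller $k$ being handled by direct computation. The interior count is routine; the endpoint bookkeeping is the main obstacle.
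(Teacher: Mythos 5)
Your overall framework is the same as the paper's (valence formula, real-valuedness of $e^{ink\theta/2}(E_k^n+E_{nk})(e^{i\theta})$ on the arc, a main-term/error-term decomposition in the style of Rankin--Swinnerton-Dyer, sample points, and the intermediate value theorem), but the proposal has a genuine gap: the step you yourself flag as ``where I expect the real difficulty'' is precisely the mathematical content that makes the theorem true with the stated thresholds, and you do not carry it out. Concretely, with your choice of main terms $M_2(\theta)=2+4\cos(k\theta)$ and $M_3(\theta)=6\cos(k\theta/2)+4\cos(3k\theta/2)$, the term $N_k(\theta)=(2\cos(\theta/2))^{-k}$ coming from $(c,d)=\pm(1,1)$ sits in your error term, and $N_k(\theta)\to 1$ as $\theta\to 2\pi/3$; so the crude bound $|\mathcal E_2|\le 4|R_k|+|R_k|^2+|R_{2k}|$ with $|R_k|$ close to $1$ exceeds $|M_2|=2$ at the alternate sample points, and for $n=3$ your $M_3$ equals only $\pm 1$ at two-thirds of the sample points, so even a small uncancelled contribution from $N_k$ destroys the sign argument there. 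Saying the term ``must be moved into an enlarged main term'' near $\rho$ is the right instinct but is not a proof: one has to show that after absorbing $N_k$ the signed main term still beats the error bound \emph{uniformly at every sample point}, including the one closest to $2\pi/3$.

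The paper does exactly this: it puts $N_k$ into the main term globally, $M_{n,k}=(M_k+N_k)^n+M_{nk}+N_{nk}$, and the whole technical weight of the argument is in Propositions~\ref{prop:lowerboundmaintern=2} and \ref{prop:lowerboundmaintern=3}, which use the monotonicity of $N_{nk}(\theta_{nk}(m))$ in $m$ and in $k$ (via Lemma 2.2 of \cite{reitzes2017zeros}) to evaluate the worst case at the largest odd sample index (e.g.\ $m_{odd}=2k/3-(3-r)/3$ for $n=2$, $m=k-1$ for $n=3$), split according to $k\bmod 6$, and obtain explicit lower bounds ($1.64849$ for $n=2$, $0.32869$ for $n=3$) that exceed the error bounds $0.56875$ and $0.17999$. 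This computation is what produces the thresholds $k\ge 10$ and $k\ge 16$; without it your proposal establishes nothing for any specific $k$. A secondary, more minor point: your plan to compute $\mathrm{ord}_i F_n$ and $\mathrm{ord}_\rho F_n$ exactly is unnecessary --- once you write $k=(12/n)l_n+s_n$, the fractional part of $nk/12$ forces the minimal weighted order at the elliptic points, the valence formula caps the non-elliptic zeros at $l_n$, and the sample-point count $[3l_n+ns_n/4,\,4l_n+ns_n/3]$ contains exactly $l_n+1$ integers, so the interior count already closes the books with no ``defect'' left to analyze; there is also no need to examine whether leading terms of $E_k^n$ and $E_{nk}$ cancel at $i$ or $\rho$.
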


\begin{theorem}\label{theorem:k+l} If $k > l \geq 10$, then all zeros of $E_k(\tau)E_l(\tau)  + E_{k+l}(\tau)$ in the fundamental domain $\mathcal{F}$ are located on the arc $\mathcal{A} = \{e^{i\theta} : \pi/2 \leq \theta \leq 2\pi/3\}$. 
\end{theorem}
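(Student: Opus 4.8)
The plan is to follow the Rankin–Swinnerton-Dyer method as adapted by Reitzes, Vulakh, and Young, exploiting the classical asymptotic expansion of Eisenstein series on the unit circle. Recall that F.K.C. Rankin and Swinnerton-Dyer proved that on the arc $\tau = e^{i\theta}$ with $\pi/2 \leq \theta \leq 2\pi/3$, one has $E_k(e^{i\theta}) = 2\cos(k\theta/2) + R_k(\theta)$, where the error term $R_k(\theta)$ is small --- in fact $|R_k(\theta)| < 1 - 2^{1-k/2}$ or similar, with explicit bounds. The first step is therefore to write, for $\tau = e^{i\theta}$,
\begin{equation*}
e^{ik\theta/2}\bigl(E_k(e^{i\theta})E_l(e^{i\theta}) + E_{k+l}(e^{i\theta})\bigr) = \text{(main term)} + \text{(error)},
\end{equation*}
where the main term, after extracting the phase $e^{ik\theta/2}$ with $k$ replaced by $k+l$, should be a trigonometric polynomial in $\theta$ whose real part is a sum of two cosine terms. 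Concretely, $E_k E_l$ contributes $4\cos(k\theta/2)\cos(l\theta/2) = 2\cos((k+l)\theta/2) + 2\cos((k-l)\theta/2)$, and $E_{k+l}$ contributes $2\cos((k+l)\theta/2)$, so the combined main term is $F(\theta) := 4\cos((k+l)\theta/2) + 2\cos((k-l)\theta/2)$ up to the common phase factor.

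The second step is to count sign changes of $F(\theta)$ on the open arc $\pi/2 < \theta < 2\pi/3$ and to check the behavior at the endpoints $\theta = \pi/2$ (the point $i$) and $\theta = 2\pi/3$ (the point $\rho = e^{2\pi i/3}$). Since $k+l$ is even and $k-l$ is even, both cosines are genuine functions of $\theta$ with no branch issues, and one expects roughly $(k+l)/2 \cdot (1/6) \approx (k+l)/12$ oscillations of the dominant term $4\cos((k+l)\theta/2)$ across the arc; the slower term $2\cos((k-l)\theta/2)$ perturbs this but, being smaller in amplitude than the dominant term is away from its own zeros, it cannot destroy the sign changes forced by the dominant term except possibly near points where $\cos((k+l)\theta/2)$ is itself small. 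One must argue, as in \cite{reitzes2017zeros}, that at each such near-cancellation the total still changes sign, perhaps by a more careful local analysis or by grouping consecutive intervals. This produces a lower bound of roughly $(k+l)/12$ zeros on the arc.

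The third step is the valence (dimension) count: the modular form $E_k E_l + E_{k+l}$ has weight $k+l$, hence at most $\lfloor (k+l)/12 \rfloor$ zeros (counted with multiplicity, with the usual fractional contributions at $i$ and $\rho$) in $\mathcal{F}$. Matching this upper bound against the lower bound from the sign-change count forces all zeros to lie on $\mathcal{A}$, provided the two counts agree; any discrepancy of one or two must be absorbed by checking the endpoints carefully (whether $i$ or $\rho$ is a zero, and with what multiplicity) and by the elementary observation that a nonreal zero off the arc would come with its reflection, overshooting the valence bound. The main obstacle I expect is precisely the error-term bookkeeping in step two: one must establish an explicit inequality of the form $|{\rm error}(\theta)| < |F(\theta)|$ at the sample points $\theta_m$ where $F$ is guaranteed to be bounded away from zero, and this requires combining the Rankin–Swinnerton-Dyer bound on $R_k$ with a bound on $R_l$, on the product $E_k R_l + R_k E_l + R_k R_l$, and on $R_{k+l}$ --- the cross terms are what make $k, l \geq 10$ (rather than a smaller bound) the threshold, and getting the constants sharp enough to push the argument down to $10$ will be the delicate part, likely requiring separate treatment of small cases or of the region near $\rho$ where $E_k$ can be small.
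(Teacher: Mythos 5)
Your skeleton (phase-normalized real function on the arc, sample points $\theta_m = 2m\pi/(k+l)$, sign changes by the intermediate value theorem, then the valence formula to cap the count at $n$ where $k+l = 12n+s$) is exactly the paper's, and your main-term identity $4\cos(\tfrac{k\theta}{2})\cos(\tfrac{l\theta}{2}) + 2\cos(\tfrac{(k+l)\theta}{2}) = 4\cos(\tfrac{(k+l)\theta}{2}) + 2\cos(\tfrac{(k-l)\theta}{2})$, giving $(-1)^m F(\theta_m) \geq 4-2 = 2$ at the sample points, is correct. The gap is in the step you yourself flag as delicate: your decomposition $e^{ik\theta/2}E_k(e^{i\theta}) = 2\cos(k\theta/2) + R_k(\theta)$ lumps the $(c,d)=\pm(1,1)$ contribution $N_k(\theta) = (2\cos(\theta/2))^{-k}$ into the error, and $N_k(\theta)\to 1$ as $\theta\to 2\pi/3$ \emph{uniformly in $k$}. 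Hence near $\rho$ the cross terms $M_k N_l + M_l N_k$ alone can have size close to $4$, and together with $N_kN_l$, $N_{k+l}$ and the genuinely small remainders the total error can approach $6$; this is not dominated by the guaranteed main-term value $2$, so the inequality $|\mathrm{error}(\theta_m)| < |F(\theta_m)|$ you need simply fails at sample points near $2\pi/3$, no matter how sharply you estimate. Increasing $k,l$ does not help, because $N_k$ does not decay near $\rho$.

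What closes the argument in the paper is to keep $N_k$ in the main term, i.e.\ take $M_{k,l} = (M_k+N_k)(M_l+N_l) + M_{k+l} + N_{k+l}$, so the leftover $R_{k,l}$ involves only the truly small remainders and satisfies $|R_{k,l}|\leq 0.39018$. Then the sign correlation between the dangerous terms is exploited at the sample points: since $M_k(\theta_m) = (-1)^m M_l(\theta_m)$ and $N_kN_l = N_{k+l}$, one gets
\begin{equation*}
(-1)^m M_{k,l}(\theta_m) = \underbrace{2 + 2(-1)^m N_{k+l}(\theta_m)}_{\geq\, 1.64} \;+\; \underbrace{M_l(\theta_m)^2 + M_l(\theta_m)\bigl\{N_l(\theta_m) + (-1)^m N_k(\theta_m)\bigr\}}_{\geq\, -1},
\end{equation*}
where the second bracket is controlled by the quadratic inequality $x^2 + cx \geq -c^2/4 \geq -1$ for $0 < c \leq 2$ (the paper sharpens this by cases on $l \bmod 6$). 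This yields $(-1)^m M_{k,l}(\theta_m) \geq 0.64 > 0.39$, which is the inequality your plan needs but cannot produce with $N_k$ in the error term. Without this regrouping and the use of the identity $M_k(\theta_m) = (-1)^m M_l(\theta_m)$ to turn the cross terms into the quadratic $M_l^2 + cM_l$, the proof does not go through.
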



\section{Work of F.K.C. Rankin and Swinnerton-Dyer}
	In this section, we brieftly discuss the argument of F.K.C. Rankin and Swinnerton-Dyer on the zeros of the Eisenstein series $E_k(\tau)$ for the modular group $\G$. In \cite{rankin1970zeros}, F.K.C. Rankin and Swinnerton-Dyer use the elementary tools from calculus such as approximations of trigonometric functions, the intermediate value theorem, and the valence formula from the theory of modular forms to prove the following theorem. 
	 
	\begin{theorem} \label{rsdtheorem} \cite{rankin1970zeros}
	For even $k \geq 4$, all zeros of $E_k(\tau)$ in the fundamental domain $\mathcal{F}$ are located on the arc 
		$$
		\mathcal{A} = \left\{ e^{i \theta} : \frac{\pi}{2} \leq \theta \leq \frac{2\pi}{3} \right\}.
		$$
	\end{theorem}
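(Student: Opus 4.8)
\begin{Sproof}
The plan is to use the approach that has since become standard: restrict $E_k$ to the arc $\mathcal{A}$, show the restriction is, up to an explicit unimodular factor, a real-valued function with an elementary dominant term, locate its sign changes by the intermediate value theorem, and then invoke the valence formula to conclude that these sign changes already account for \emph{all} zeros of $E_k$ in $\mathcal{F}$. Concretely, parametrise $\mathcal{A}$ by $\tau = e^{i\theta}$, $\pi/2 \le \theta \le 2\pi/3$, and set $F_k(\theta) = e^{ik\theta/2}E_k(e^{i\theta})$. Since $E_k$ has a real $q$-expansion and $-\overline{\tau} = -1/\tau$ on the unit circle, the transformation law $E_k(-1/\tau) = \tau^k E_k(\tau)$ gives $\overline{E_k(e^{i\theta})} = e^{ik\theta}E_k(e^{i\theta})$, hence $F_k(\theta) \in \R$. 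Isolating the lattice pairs $(c,d) = (0,\pm1),(\pm1,0)$ in \eqref{def:Eisensteinseries}, which together contribute $1 + e^{-ik\theta}$ to $E_k(e^{i\theta})$ (here $k$ even is used), one obtains
\[
F_k(\theta) = 2\cos\!\left(\tfrac{k\theta}{2}\right) + R_k(\theta), \qquad R_k(\theta) = e^{ik\theta/2}\!\!\!\sum_{\substack{\gcd(c,d)=1\\ (c,d)\notin\{(0,\pm1),(\pm1,0)\}}}\!\!\! \tfrac12(ce^{i\theta}+d)^{-k}.
\]

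The key step is the uniform bound $|R_k(\theta)| < 2$ on $[\pi/2,2\pi/3]$. For this one notes that $|ce^{i\theta}+d| \ge 1$ for every admissible pair; that the only non-principal pairs with $|ce^{i\theta}+d| \le \sqrt{2}$ are $(1,1)$ and $(-1,-1)$, whose terms combine to $(e^{i\theta}+1)^{-k}$ with $|e^{i\theta}+1|^2 = 2+2\cos\theta \ge 1$ (equality only at $\theta = 2\pi/3$); and that all remaining pairs have $|ce^{i\theta}+d|^2 \ge 2$. A geometric-type tail estimate then gives $|R_k(\theta)| \le 1 + O(2^{-k/2}) < 2$ once $k$ is large, say $k \ge 12$. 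The small weights $k = 4,6,8,10$ are treated separately: there $\dim M_k = 1$, so $E_k = E_4^a E_6^b$ for appropriate $a,b$, and the valence formula forces all its zeros onto $\{i,\rho\} \subset \mathcal{A}$.

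To finish, observe that at any $\theta$ with $k\theta/2 \in \pi\Z$ one has $|2\cos(k\theta/2)| = 2 > |R_k(\theta)|$, so $F_k(\theta) \ne 0$ and has the sign of $2\cos(k\theta/2)$; consecutive such $\theta$ carry opposite signs, so $F_k$, and hence $E_k$, has a zero between them. Counting the multiples of $\pi$ in the length-$k\pi/12$ interval $[k\pi/4, k\pi/3]$, together with the zeros forced at the endpoints $\tau = i$ (when $4 \nmid k$) and $\tau = \rho$ (when $3 \nmid k$), yields at least $k/12$ zeros of $E_k$ on $\mathcal{A}$, counted with weights $\tfrac12$ at $i$ and $\tfrac13$ at $\rho$. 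The valence formula bounds the total number of zeros of $E_k$ in $\mathcal{F}$ (with these same weights) by $k/12$, and this total equals $k/12$ because $v_\infty(E_k) = 0$; hence the lower bound is sharp and $E_k$ has no zeros off $\mathcal{A}$. I expect the main obstacle to be the remainder bound near $\theta = 2\pi/3$, where $|e^{i\theta}+1| \to 1$ and the $(1,1)$-term is no longer negligible, so one must argue with care that $|R_k|$ stays strictly below $2$ there; relatedly, the zero count must be made to match $k/12$ \emph{exactly} rather than up to a bounded error, which requires tracking the endpoint behaviour at $i$ and $\rho$ precisely.
\end{Sproof}
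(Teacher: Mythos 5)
Your proposal is correct and follows essentially the same route as the paper's sketch: the real-valued function $F_k(\theta)=e^{ik\theta/2}E_k(e^{i\theta})$, the dominant term $2\cos(k\theta/2)$ from $c^2+d^2=1$, a remainder bounded strictly below $2$, sign changes at $\theta=2m\pi/k$, and the valence formula to show these exhaust all zeros. The only cosmetic difference is that you absorb the $(c,d)=\pm(1,1)$ contribution $(2\cos(\theta/2))^{-k}$ (bounded by $1$) into the remainder rather than keeping it as a separate term $N_k(\theta)$ as the paper does; the resulting bound $|R_k|\leq 1+O(2^{-k/2})<2$ matches the paper's $|N_k|+|R_k|\leq 1.03563$.
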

	
\begin{Sproof} In 1960s, Wohlfahrt and R.A. Rankin gave partial results of the zeros of the Eisenstein series for $\Slz$ in  \cite{wohlfahrt1963nullstellen} and \cite{rankin1969zeros} for even $4 \leq k \leq 34$, and $k=38$. To prove Theorem \ref{rsdtheorem}, F.K.C. Rankin and Swinnerton-Dyer consider even $k \geq 12$ and write
$$
k = 12n + s
$$
with uniquely determined $n \in \mathbb{Z}$ and $s \in \{0, 4,6,8,10,14\}$. 

Note that any nonzero modular form $f(\tau)$ of weight $k$ for $\G$ satisfies the valence formula
	\begin{equation}\label{valenceformula}
	\nu_\infty (f) + \frac{1}{2} \nu_i(f) + \frac{1}{3} \nu_\rho(f) + \sum_{\tau \in \mathcal{F} \backslash \{i, \rho\}} \nu_\tau(f) = \frac{k}{12},
	\end{equation}
	where $\rho = e^{2\pi i / 3}$ and $\nu_\tau(f)$ is the order of vanishing of $f$ at $\tau \in \mathcal{F}$. With the above notation $k = 12 n + s$, we have that $E_k(\tau)$ satisfies
$$
\frac{1}{2} \nu_i(E_k) + \frac{1}{3} \nu_\rho(E_k) + \sum_{\tau \in \mathcal{F} \backslash \{i, \rho\}} \nu_\tau(E_k) = n+\frac{s}{12},
$$
where $\nu_\infty (E_k) = 0$ since $E_k(\tau)$ is holomorphic at $\infty$ and the constant term in its $q$-expansion equals 1. Also, by considering all possible values of $s \in \{0,4,6,8,10,14\}$, we find that $s/12$ determines the order of zeros at $\tau = i, \rho$. 

Then to show that all zeros of $E_k(\tau)$ are located on the lower arc $\mathcal{A}$, it suffices to show that a function $E_k(e^{i\theta})$ has at least $n$ zeros on $(\pi/2, 2\pi/3)$. 

F.K.C. Rankin and Swinnerton-Dyer consider the function 
	\begin{equation}\label{def:Fk}
	F_k(\theta):=e^{ik\theta/2}E_k(e^{i\theta}),
	\end{equation} 
	 which clearly share the same set of zeros with the function $E_k(e^{i\theta})$ on $[\pi/2, 2\pi/3]$. Moreover, $F_k(\theta)$ is real on $[\pi/2, 2\pi/3]$ by Proposition 2.1 of \cite{getz2004generalization}.
	 
	 By the definition of $E_k(\tau)$ given in (\ref{def:Eisensteinseries}), we can write
	$$
	F_k(\theta) = \dfrac{1}{2} \sum_{\substack{c,d \in \mathbb{Z} \\ \gcd (c,d) =1}} (ce^{i\theta/2} + de^{-i\theta/2} )^{-k}.
	$$
 Let $M_k(\theta)$ denote the sum in the series with $c^2 + d^2 = 1$, $N_k(\theta)$ denote the sum in the series with $(c, d) = \pm (1, 1)$ and denote the remainder of the series $R_k(\theta)$. Then 
	\begin{equation}\label{def:maintermofFk}
	F_k(\theta)  = M_k(\theta) + N_k(\theta) + R_k(\theta),
	\end{equation}
	where 
	\begin{equation}\label{def:MkNk}
	M_k(\theta) := 2\cos \left(\frac{k \theta}{2} \right), \quad \text{and} \quad  N_k(\theta) := \left( 2\cos \left( \frac{\theta}{2}\right) \right)^{-k}.
	\end{equation}
	and
	$$
	R_k(\theta) = \left( 2i \sin\left( \frac{\theta}{2}\right) \right)^{-k} + \dfrac{1}{2} \sum_{\substack{\gcd(c,d) =1 \\ c^2 + d^2 \geq 5}} \frac{1}{(ce^{i\theta/2} + d e^{-i\theta/2})^k}.
	$$
By the triangle inequality, approximation on trigonometric functions, and the integral test, they prove that for $k \geq 12$, $|R_k(\theta)|$ is monotonically decreasing as a function in $k$ and bounded above by
	\begin{equation}\label{def:remaindertermofFk}
	|R_k| \leq \left(\frac{1}{2} \right)^{k/2} + 4 \left( \frac{2}{5} \right)^{k/2} + \frac{20\sqrt{2}}{k-3} \left( \frac{2}{9} \right)^{(k-3)/2} \leq 0.3563. 
	\end{equation}
Hence, (\ref{def:remaindertermofFk}), and the fact that $|N_k(\theta)| = |(2\cos(\theta/2))^{-k}| \leq 1$ on $[\pi/2, 2\pi/3]$,
\begin{equation}\label{smallerthan2}
\left| F_k(\theta) - M_k(\theta) \right|  \leq |N_k(\theta)| + |R_k(\theta)| \leq 1.03563.
\end{equation}
By taking $\theta_m := 2m\pi/k$ where $m$ ranges over integers so that $\theta_m \in [\pi/2, 2\pi/3]$, $M_k(\theta_m) = 2\cos(m\pi) = 2(-1)^m$ and therefore the lower bound given in (\ref{smallerthan2}) tells us that $F_k(\theta_m)$ has different sign for consecutive integers $m$'s. 

We now apply the intermediate value theorem to conclude that the minimum number of zeros of the function $F_k(\theta)$ and hence $E_k(e^{i\theta})$ in $(\pi/2, 2\pi/3)$ is the number of integers in $[k/4, k/3]$ minus 1. Using the parameterization $k = 12n + s$ where $s \in \{0, 4, 6, 8, 10, 14\}$, the number of integers in $[k/4, k/3]$ equals the number of integers in $[3n+s/4, 4n + s/3]$. We see that for each choice of $s$ there are $n + 1$ integers in this interval.  Thus, the function $E_k(e^{i\theta})$ has at least $n$ zeros in $(\pi/2, 2\pi/3)$ and this completes the proof of Theorem \ref{rsdtheorem}.
\end{Sproof}

\section{Locating the zeros of $E_k^n(\tau) + E_{nk}(\tau)$}	

For even $k\geq 4$ and $n =2, 3$, we write 
$$
k = \left(\frac{12}{n}\right) l_n + s_n 
$$
where $l_n \in \mathbb{Z}$ and $s_n \in \{ 0, 2, \dots,\left( 12/n\right) -2 \}$. The valence formula (\ref{valenceformula}) guarantees that the modular form $E_k^n(\tau) + E_{nk}(\tau)$ has zeros of order at least $n s_n/12$  at $z  =i, \rho$ and has $l_n$ zeros  in $\mathcal{F} \backslash \{i, \rho \}$ (counting multiplicities).  

This argument and Proposition 2.1 of \cite{getz2004generalization} imply that to prove that all zeros of $E_k^n(\tau) + E_{nk}(\tau)$ lie on the arc $\mathcal{A}$, it suffices to prove that the real-valued function 
	\begin{equation}\label{def:Fnk}
	F_{n,k}(\theta) := e^{ink\theta/2} \left(E_k^n+ E_{nk} \right) (e^{i\theta}) 
	\end{equation}
	has at least $l_n$ zeros in the open interval $(\pi/2, 2\pi/3)$. 

\subsection{Extraction of the main and error terms} 
Similar to the method of F.K.C. Rankin and Swinnerton-Dyer reviewed in Section 2, we begin with writing the function $F_{n,k}(\theta)$ as a sum of main and remainder terms and then give an upper bound of the remainder term. 

\begin{proposition} \label{prop:Fnkmainplusremainderterms}  For even $k \geq 4$, for $n =2,3$ and for $\theta\in [\pi/2, 2\pi/3]$, we have 
		\begin{equation}
		F_{n,k}(\theta) = M_{n,k}(\theta) + R_{n,k}(\theta)
		\end{equation}
		where 
		\begin{equation}\label{def:Fnkmainterm}
		M_{n,k}(\theta) =  \left( M_k(\theta) + N_k(\theta) \right)^n+ M_{nk}(\theta) + N_{nk}(\theta)
		\end{equation}
		with $M_k(\theta)$ and $N_k(\theta)$ are defined in (\ref{def:MkNk}) and 
		\begin{equation}\label{def:Fnkupperboundremainderterm}
		|R_{n,k}(\theta)| \leq \left\{ \begin{array}{rl}
0.56875 &\mbox{ if $n=2$ and $k \geq 10$,} \\
0.17999 &\mbox{ if $n = 3$ and $k \geq 16$.}
\end{array} \right.
		\end{equation}
\end{proposition}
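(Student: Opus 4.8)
The plan is to bootstrap off the Rankin--Swinnerton-Dyer decomposition of $F_k$ recalled in Section~2, the one nontrivial observation being that the exponential twist is multiplicative: from (\ref{def:Fk}) we have $e^{ink\theta/2}E_k^n(e^{i\theta}) = \bigl(e^{ik\theta/2}E_k(e^{i\theta})\bigr)^n = F_k(\theta)^n$ and $e^{ink\theta/2}E_{nk}(e^{i\theta}) = F_{nk}(\theta)$, so the definition (\ref{def:Fnk}) gives the exact identity $F_{n,k}(\theta) = F_k(\theta)^n + F_{nk}(\theta)$. Substituting $F_k = M_k + N_k + R_k$ and $F_{nk} = M_{nk} + N_{nk} + R_{nk}$ from (\ref{def:maintermofFk}) and \emph{defining} $R_{n,k} := F_{n,k} - M_{n,k}$ with $M_{n,k}$ as in (\ref{def:Fnkmainterm}), the binomial theorem yields the exact formula
\[
R_{n,k}(\theta) \;=\; \sum_{j=1}^{n}\binom{n}{j}\bigl(M_k(\theta)+N_k(\theta)\bigr)^{n-j}R_k(\theta)^{\,j} \;+\; R_{nk}(\theta),
\]
so the identity asserted in the proposition is purely formal; all of the content sits in the numerical bound (\ref{def:Fnkupperboundremainderterm}) on the right-hand side.

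For that bound I would assemble three elementary estimates valid on $[\pi/2,2\pi/3]$. First, $|M_k(\theta)| = |2\cos(k\theta/2)| \le 2$. Second, $|N_k(\theta)| = (2\cos(\theta/2))^{-k} \le 1$, since $2\cos(\theta/2) \ge 2\cos(\pi/3) = 1$ on this interval (as already used in (\ref{smallerthan2})); hence $|M_k(\theta)+N_k(\theta)| \le 3$. Third, for $R_k$ I would invoke the Rankin--Swinnerton-Dyer tail estimate underlying (\ref{def:remaindertermofFk}), namely $|R_k(\theta)| \le (1/2)^{k/2} + 4(2/5)^{k/2} + \tfrac{20\sqrt2}{k-3}(2/9)^{(k-3)/2}$, a quantity that decreases in $k$; evaluating it at the minimal relevant weights bounds $|R_k|$ by roughly $0.093$ for $k\ge10$ and by roughly $0.0067$ for $k\ge16$, with the bounds on $|R_{2k}|$ (for $k \ge 20$) and $|R_{3k}|$ (for $k \ge 48$) far smaller. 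Feeding these into the displayed identity gives, for $n=2$,
\[
|R_{2,k}(\theta)| \;\le\; 6\,|R_k(\theta)| + |R_k(\theta)|^2 + |R_{2k}(\theta)| \;\le\; 0.56875 \qquad (k \ge 10),
\]
and, for $n=3$,
\[
|R_{3,k}(\theta)| \;\le\; 27\,|R_k(\theta)| + 9\,|R_k(\theta)|^2 + |R_k(\theta)|^3 + |R_{3k}(\theta)| \;\le\; 0.17999 \qquad (k \ge 16).
\]

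The one delicate point, and the main obstacle, is that Section~2 records the estimate (\ref{def:remaindertermofFk}) only for $k \ge 12$, whereas the $n=2$ statement descends to $k = 10$, where the inequality above turns out to be numerically tight (the slack is on the order of $10^{-6}$, so the arithmetic must be done with the exact closed-form values of the tail bound rather than with rounded decimals). I would therefore re-run the Rankin--Swinnerton-Dyer derivation of the bound on $|R_k(\theta)|$ at $k = 10$: the first summand comes from $|(2i\sin(\theta/2))^{-k}| \le (2\sin(\theta/2))^{-k} \le (\sqrt2)^{-k}$, the second from the eight lattice points with $c^2+d^2 = 5$ together with $|ce^{i\theta/2}+de^{-i\theta/2}|^2 \ge \tfrac12(c^2+d^2)$ on $[\pi/2,2\pi/3]$, and the third from an integral-test bound on the remaining tail over $c^2+d^2 > 5$; none of these steps requires $k \ge 12$, so the same formula persists at $k = 10$ and the restriction in Section~2 is present only in order to quote a single clean constant. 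Everything else is bookkeeping: the decomposition is forced once $M_{n,k}$ is specified, and the final inequalities are arithmetic with the three estimates above.
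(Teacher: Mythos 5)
Your proposal is correct and follows essentially the same route as the paper: write $F_{n,k}=F_k^n+F_{nk}$, substitute the Rankin--Swinnerton-Dyer decomposition $F_k=M_k+N_k+R_k$, bound $|M_k+N_k|\le 3$ on $[\pi/2,2\pi/3]$, and evaluate the tail bound (\ref{def:remaindertermofFk}) at $k=10$ (resp.\ $k=16$). If anything you are more careful than the paper on the two fragile points --- the paper's displayed binomial expansion (\ref{proof:simplifiedFkn}) has an indexing slip (its sum should run only to $i=n-1$, as your exact formula for $R_{n,k}$ makes clear), and you correctly flag that (\ref{def:remaindertermofFk}) is quoted only for $k\ge 12$ yet is needed at $k=10$, where the constant $0.56875$ is tight to within about $10^{-6}$, so your re-derivation of the bound at $k=10$ is a genuinely necessary step that the paper leaves implicit.
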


\begin{proof} We can write 
	$$
	F_{n,k}(\theta) = (F_k^n + F_{nk})(\theta). 
	$$
where $F_k(\theta)$ is defined in (\ref{def:Fk}).
Expanding the right hand side using (\ref{def:maintermofFk}),
we derive
		\begin{equation} \label{proof:simplifiedFkn}
		\begin{aligned}
		F_{n,k}(\theta) 
		&= \left( M_k(\theta) + N_k(\theta) \right)^n+  M_{nk}(\theta) + N_{nk}(\theta) \\
		&\quad + \sum_{i=1}^n \binom{n}{i}   \left( M_k(\theta) + N_k(\theta) \right)^i R_k^{n-i}(\theta) + R_k^n(\theta) + R_{nk}(\theta).
		\end{aligned}
		\end{equation}
Let $M_{n,k}(\theta)$ and $R_{n,k}(\theta)$ be the main and remainder terms of $F_{n,k}(\theta)$ obtained from the first and second line of (\ref{proof:simplifiedFkn}) respectively.  Since $|M_k(\theta)| = |2\cos(k\theta/2)| \leq 2$ and $|N_k(\theta)| = |(2\cos(\theta/2))^{-k}| \leq 1$ on $[\pi/2, 2\pi/3]$, the triangle inequality gives us
			$$
			|R_{n,k}(\theta)| \leq \sum_{i=1}^n \binom{n}{i} 3^i |R_k(\theta)|^{n-i} + |R_k(\theta)|^n + |R_{nk}(\theta)|. 
			$$
Recall that $|R_k(\theta)|$ is monotonically decreasing as a function in $k$ so the  term $|R_{n,k}(\theta)|$ is also. Evaluating the upper bound of $|R_k(\theta)|$ in (\ref{def:remaindertermofFk}) at $k = 10$ (and $k = 16$), we easily obtain the upper bound for $|R_{n,k}(\theta)|$ in  (\ref{def:Fnkupperboundremainderterm}). 
\end{proof}

\subsection{Sample points} 
	Let $k \geq 10$ be an even integer and let $n \in \{2, 3\}$. We define 
	$$
	\theta_{nk}(m) := \frac{2m\pi}{nk}
	$$
	where $m$ ranges over integers so that $\theta_{nk}(m) \in [\pi/2, 2\pi/3]$. Observe that 
	$$
	\theta_{nk}(m)\in \left[ \frac{\pi}{2}, \frac{2\pi}{3} \right] \Leftrightarrow m \in \left[ \frac{nk}{4}, \frac{nk}{3} \right].
	$$
	
	Our goal for the rest of this section is to show that the function $F_{n,k}(\theta_m)$ is strictly positive or negative according to the parity of $m \in [ nk/4, nk/3]$. Since $F_{n,k}(\theta) = M_{n,k}(\theta) + R_{n,k}(\theta)$ by Proposition \ref{prop:Fnkmainplusremainderterms}, we show that for all integers $m \in [nk/4, nk/3]$, a lower bound of $(-1)^m M_{n,k}(\theta_{nk}(m))$ is greater than the upper bound of $|R_{n,k}(\theta)|$ given in Proposition \ref{prop:Fnkmainplusremainderterms}. 

\subsection{Bounding the main term} We first give a lower bound on $(-1)^m M_{2,k}(\theta_{2k}(m))$.

	\begin{proposition} \label{prop:lowerboundmaintern=2} For even $k \geq 10$ and $\theta_{2k}(m) \in [\pi/2, 2\pi/3]$, we have 
		$$
		(-1)^m M_{2,k}(\theta_{2k}(m)) \geq 1.64849. 
		$$
	\end{proposition}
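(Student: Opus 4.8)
The plan is to evaluate $M_{2,k}(\theta_{2k}(m)) = (M_k(\theta)+N_k(\theta))^2 + M_{2k}(\theta) + N_{2k}(\theta)$ at the sample points $\theta = \theta_{2k}(m) = m\pi/k$ and extract a clean lower bound for $(-1)^m$ times this quantity. First I would compute the two "big" trigonometric pieces at $\theta_{2k}(m)$: since $M_{2k}(\theta) = 2\cos(k\theta)$, at $\theta = m\pi/k$ we get $M_{2k}(\theta_{2k}(m)) = 2\cos(m\pi) = 2(-1)^m$, which contributes exactly $2$ to $(-1)^m M_{2,k}$. The term $M_k(\theta_{2k}(m)) = 2\cos(m\pi/2)$ is $0$ when $m$ is odd and $\pm 2$ when $m$ is even, so it is essential to split into the cases $m$ odd and $m$ even.

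In the odd case, $M_k(\theta_{2k}(m)) = 0$, so $(M_k+N_k)^2 = N_k(\theta)^2 \ge 0$, and $(-1)^m M_{2,k} = -N_k(\theta)^2 + 2 - N_{2k}(\theta)$; here one bounds $N_k(\theta)^2 = (2\cos(\theta/2))^{-2k}$ and $N_{2k}(\theta) = (2\cos(\theta/2))^{-2k}$ on $[\pi/2,2\pi/3]$, using that $2\cos(\theta/2) \ge 2\cos(\pi/3) = 1$ forces these to be at most $1$, but one needs the slightly sharper bound coming from $\theta \le 2\pi/3$ only at the right endpoint — in fact on the whole interval $2\cos(\theta/2)$ ranges in $[1, \sqrt 2]$, so both terms are at most $1$ and at least $2^{-k}$, giving $(-1)^m M_{2,k} \ge 2 - 2\cdot 1$, which is not yet good enough, so one must instead use that the worst case $2\cos(\theta/2)=1$ occurs only at $\theta = 2\pi/3$, and argue that the sample point $\theta_{2k}(m)$ nearest to $2\pi/3$ is bounded away from it: the largest admissible $m$ satisfies $m/k \le 2/3$, and combined with the parity constraint one gets $2\cos(\theta_{2k}(m)/2)$ bounded below by a constant strictly exceeding $1$, yielding the numerical bound $1.64849$. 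In the even case, $(M_k+N_k)^2 = (\pm 2 + N_k)^2 \ge (2 - 1)^2 = 1$ using $|N_k|\le 1$, and in fact $\ge (2 - (2\cos(\theta/2))^{-k})^2$, so $(-1)^m M_{2,k} \ge (2 - (2\cos(\theta/2))^{-k})^2 + 2 - (2\cos(\theta/2))^{-k} - (-1)^m\cdot$ (the cross contributions); here the bound is comfortably larger, so the odd case is the binding one.

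Concretely, I would set $x := (2\cos(\theta_{2k}(m)/2))^{-1} \in (0,1]$, rewrite $(-1)^m M_{2,k}(\theta_{2k}(m))$ as an explicit function of $x$ and the parity of $m$ (degree-$2k$ and degree-$k$ monomials in $x$, with $\pm$ signs from $\cos(m\pi/2)$), observe that each such function is monotone in $x$ on $(0,1]$ for $k \ge 10$, so its minimum is attained at $x=1$ unless the relevant sample point forces $x$ strictly below $1$. The key quantitative input is exactly that the admissible $m$ with $\theta_{2k}(m)$ closest to $2\pi/3$ cannot have $\theta_{2k}(m) = 2\pi/3$ on the nose for most $k$, and that even when it can, the contribution $2(-1)^m$ from $M_{2k}$ dominates; a direct check of the endpoint values $x=1$ and the nearby sample point, together with monotonicity, produces $1.64849$. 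I would present the odd/even split, the substitution $x = (2\cos(\theta/2))^{-1}$, the monotonicity claim (a one-line derivative sign check), and the endpoint evaluation.

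The main obstacle I anticipate is the boundary behavior at $\theta = 2\pi/3$, where $N_k$ and $N_{2k}$ are largest ($= 1$) and where the naive triangle-inequality bound $(-1)^m M_{2,k} \ge 2 - 1 - 1 = 0$ is useless. Resolving this requires pinning down precisely how close an admissible sample point $\theta_{2k}(m)$ of the correct parity can come to $2\pi/3$ as $k$ varies, so that $x = (2\cos(\theta_{2k}(m)/2))^{-1}$ is bounded away from $1$ by an amount depending on $k$ (but uniformly bounded below for $k \ge 10$), and then checking that the resulting decay of the $x^{k}$, $x^{2k}$ terms is fast enough to clear the target constant. This is the same delicate endpoint analysis that appears in Rankin–Swinnerton-Dyer, adapted to the squared main term.
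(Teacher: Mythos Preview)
Your plan is essentially the paper's proof: split by the parity of $m$, use $M_k(\theta_{2k}(m))=2\cos(m\pi/2)$ so that the odd-$m$ case collapses to $(-1)^m M_{2,k}=2-2N_{2k}(\theta_{2k}(m))$, observe this is the binding case, and then control $N_{2k}(\theta_{2k}(m_{\mathrm{odd}}))$ by locating the largest odd $m\le 2k/3$. The paper carries this out by a case split on $k\pmod 6$ to write $m_{\mathrm{odd}}=2k/3-(3-r)/3$ explicitly, and then invokes Lemma~2.2 of \cite{reitzes2017zeros} to get that $\bigl(2\cos(\pi/3-c/k)\bigr)^{-2k}$ is monotone in $k$, so it suffices to evaluate at the smallest $k$ in each residue class; the worst case $k\equiv 2\pmod 6$ at $k=14$ gives $1.64849$. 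Your sketch has all of these ingredients except that you phrase the separation from $2\pi/3$ as ``$2\cos(\theta_{2k}(m)/2)$ bounded below by a constant strictly exceeding $1$,'' which is not literally true uniformly in $k$ (the gap is only $O(1/k)$); the actual uniformity comes from the $(-2k)$-th power, and this is exactly the monotonicity lemma you allude to later but do not name.
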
 
	
	\begin{proof} We observe that
	$$
	\theta_{2k}(m)= \frac{m\pi}{k} \in \left[\frac{\pi}{2}, \frac{2\pi}{3} \right] \Leftrightarrow m \in \mathbb{Z} \cap \left[ \frac{k}{2}, \frac{2k}{3} \right].
	$$
	Substituting $\theta_{2k}(m)$ into  (\ref{def:Fnkmainterm}), we obtain
			\begin{equation}\label{def:plugintheta2kminM2k}
			\begin{aligned}
			M_{2,k}(\theta_{2k}(m)) &= \left( M_k(\theta_{2k}(m)) + N_k(\theta_{2k}(m)) \right)^2 +  2(-1)^m +  N_{2k}(\theta_{2k}(m)).
			\end{aligned}
			\end{equation}
We note that for even $k$, and for $n =2, 3$, it is straightforward to check that the derivative of 
\begin{equation}\label{lemma1}
N_{nk}(\theta_{nk}(m)) = \left( 2\cos\left( \frac{m\pi}{nk} \right)\right)^{-nk}
\end{equation} 
is positive for $m \in [nk/4, nk/3]$ and therefore $N_{nk}(\theta_{nk}(m))$ is positive and monotonically decreasing as a function of $m$ in that interval. From this and (\ref{def:plugintheta2kminM2k}), 
	\begin{equation}\label{ineq:maintermwhenmisoddmstar}
		(-1)^m M_{2,k}(\theta_{2k}(m)) \geq 2 - 2 \left( 2\cos\left(\frac{m_{odd} \pi}{2k} \right)\right)^{-2k}. 
	\end{equation}
where $m_{odd}$ is the largest odd integer in $[k/2, 2k/3]$. Considering $k \pmod 6$, 
			\begin{equation}\label{def:mstar}
			m_{odd}= \frac{2k}{3} - \frac{3-r}{3},
		 	\end{equation}
where $k \equiv r \pmod 6$ with $r \in \{ 0,\pm 2 \}$. 
Substituting  (\ref{def:mstar}) into (\ref{ineq:maintermwhenmisoddmstar}), we obtain
	$$
	(-1)^m M_{2k}(\theta_{2k}(m)) \geq 2 - 2 \left( 2\cos\left( \frac{\pi}{3} - \left(  \frac{3-r}{3}\right) \frac{\pi}{2k}\right) \right)^{-2k} . 
	$$
By Lemma 2.2 of \cite{reitzes2017zeros} and the identity $\cos(\theta) = \sin(\pi/2 - \theta)$, the right hand side is monotonically increasing as a function in $k$. 
Hence, for $k \equiv 0 \pmod 6$ and $k \geq 12$,
	\begin{equation}\label{ineq:2klowerbound1}
	\begin{aligned}
	 (-1)^m M_{2k}(\theta_{2k}(m)) \geq  2 - 2 \left( 2\cos\left( \frac{\pi}{3} - \frac{\pi}{2(12)}\right) \right)^{-2(12)} \geq 1.98223.
	\end{aligned}
	\end{equation}
Applying this argument to the cases $k \equiv -2, 2 \pmod 6$, we obtain that  for $k \equiv -2 \pmod 6$ and $k \geq 10$, 
\begin{equation}\label{ineq:2klowerbound2}
(-1)^m M_{2k}(\theta_{2k}(m)) \geq  2  2 \left( 2\cos\left( \frac{\pi}{3} - \frac{5\pi}{3(2(10))}\right) \right)^{-2(10)} \geq 1.99804,
\end{equation}
and for $k \equiv 2 \pmod 6$ and $k \geq 14$, 
\begin{equation}\label{ineq:2klowerbound3}
(-1)^m M_{2k}(\theta_{2k}(m)) \geq  2 - 2 \left( 2\cos\left( \frac{\pi}{3} - \frac{\pi}{3 (2(8))}\right) \right)^{-2(8)} \geq 1.64849.
\end{equation}
By (\ref{ineq:2klowerbound1}), (\ref{ineq:2klowerbound2}), and (\ref{ineq:2klowerbound3}), we have proved Proposition \ref{prop:lowerboundmaintern=2}.	
	\end{proof}

	Next, we give a lower bound of $(-1)^m M_{3,k}(\theta_{3k}(m))$. The proof is based on the concept of the proof of Proposition \ref{prop:lowerboundmaintern=2}. 
	
	\begin{proposition}
	\label{prop:lowerboundmaintern=3} For even $k \geq 16$ and $\theta_{3k}(m) \in [\pi/2, 2\pi/3]$, we have 
		$$
		(-1)^m M_{3,k}(\theta_{3k}(m)) \geq 0.32869. 
	    $$
	\end{proposition}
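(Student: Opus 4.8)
The plan is to mirror the proof of Proposition \ref{prop:lowerboundmaintern=2} with $n=3$. First I would substitute $\theta_{3k}(m) = 2m\pi/(3k)$ into the definition (\ref{def:Fnkmainterm}) of the main term, obtaining
$$
M_{3,k}(\theta_{3k}(m)) = \bigl(M_k(\theta_{3k}(m)) + N_k(\theta_{3k}(m))\bigr)^3 + 2(-1)^m + N_{3k}(\theta_{3k}(m)),
$$
since $M_{3k}(\theta_{3k}(m)) = 2\cos(m\pi) = 2(-1)^m$. Here, unlike the $n=2$ case, the cube $(M_k + N_k)^3$ is \emph{not} automatically nonnegative, so multiplying by $(-1)^m$ requires care: I would first note $|M_k(\theta)+N_k(\theta)| \le |M_k(\theta)| + |N_k(\theta)| \le 2 + (2\cos(\theta/2))^{-k}$, but to get a useful bound I should instead observe that on the relevant sample points the dominant contribution is the $2(-1)^m$ term and control the cube term in absolute value. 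Thus
$$
(-1)^m M_{3,k}(\theta_{3k}(m)) \ge 2 - \bigl| M_k(\theta_{3k}(m)) + N_k(\theta_{3k}(m)) \bigr|^3 - N_{3k}(\theta_{3k}(m)),
$$
and I would bound both subtracted terms using the fact (established just before (\ref{ineq:maintermwhenmisoddmstar}), via (\ref{lemma1})) that $N_{nk}(\theta_{nk}(m))$ is positive, decreasing in $m$ on $[nk/4, nk/3]$, hence maximized at the left endpoint $m \approx 3k/4$, where $\theta \approx \pi/2$ and $2\cos(\theta/2) = 2\cos(\pi/4) = \sqrt 2$, giving $N_{3k} \le (\sqrt 2)^{-3k} = 2^{-3k/2}$, which is tiny for $k \ge 16$.

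The subtler point is bounding $|M_k(\theta_{3k}(m)) + N_k(\theta_{3k}(m))|^3$. We have $N_k(\theta_{3k}(m)) = (2\cos(m\pi/(3k)))^{-k} \le 2^{-k/2}$ by the same endpoint argument, so $|N_k| \le 2^{-k/2} \le 2^{-8}$ for $k \ge 16$. For $M_k(\theta_{3k}(m)) = 2\cos(2m\pi/(3k)) = 2\cos((2m/3) \cdot \pi/k)$: as $m$ runs over integers with $3k/4 \le m \le k$, the quantity $2m/(3k)$ runs over a subset of $[1/2, 2/3]$, and $M_k$ is small precisely when $2m/(3k)$ is close to a half-integer --- but $1/2$ is a half-integer, so when $m = \lceil 3k/4 \rceil$ is such that $2m/(3k) \approx 1/2$, the term $M_k$ can be close to $\pm 2$, not small. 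This is the main obstacle: at such sample points $|M_k + N_k|^3$ could be as large as roughly $8$, which would swamp the $2$. I expect the resolution is that the claimed constant $0.32869$ already reflects this: $2 - 8 < 0$, so the bound cannot simply be the crude one. I would therefore need to argue more carefully --- either (a) that the sample points $\theta_{3k}(m)$ used in the intermediate value argument can be chosen to \emph{avoid} the bad region near $\theta = \pi/2$ where $M_k \approx \pm 2$, or (b) that when $M_k(\theta_{3k}(m))$ is large in absolute value it has a definite sign matching $(-1)^m$ so that $(-1)^m(M_k + N_k)^3 \ge 0$ there. Option (b) is the natural one: if $m$ is such that $2m/(3k)$ is near $p$ for an integer $p$, then $M_k(\theta_{3k}(m)) = 2\cos(2m\pi/(3k)) \approx 2(-1)^p$, and one must check that the parity of $p$ relates to the parity of $m$ appropriately (namely that $(-1)^m (M_k+N_k)^3$ and $2(-1)^m$ reinforce rather than cancel), which is a finite case analysis modulo small moduli, analogous to the $k \pmod 6$ split in the $n=2$ proof.

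Concretely, I would split into the two regimes. When $M_k(\theta_{3k}(m))$ and $(-1)^m$ have the same sign (so the cube reinforces), drop the cube term and use $(-1)^m M_{3,k} \ge 2 - N_{3k} \ge 2 - 2^{-3k/2} \cdot 2$, which is comfortably above $1$. When they have opposite signs, I would need the finer fact that opposite signs force $|M_k(\theta_{3k}(m))|$ to be bounded away from $2$ --- specifically, by a $\pmod{?}$ argument on $m$, the worst such $m$ gives $2m/(3k)$ at a controlled distance $\delta/k$ from the nearest half-integer, so $|M_k| = 2|\cos(2m\pi/(3k))| \le 2\cos(\pi/2 - \pi\delta/k) = 2\sin(\pi\delta/k)$, which is $O(1/k)$; then $|M_k + N_k|^3 = O(1/k^3)$ and $(-1)^m M_{3,k} \ge 2 - O(1/k^3) - O(2^{-3k/2})$, again above $1$ for $k \ge 16$. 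Carrying out the $\pmod{?}$ bookkeeping to pin down the exact worst-case $m$ for each residue class --- exactly as (\ref{def:mstar})--(\ref{ineq:2klowerbound3}) did for $n=2$ --- and then invoking Lemma 2.2 of \cite{reitzes2017zeros} for monotonicity in $k$ of the resulting trigonometric expression, is the routine-but-tedious part that produces the stated constant $0.32869$. The main conceptual obstacle, and the only place the proof genuinely differs from the $n=2$ case, is handling the sign of the cube $(M_k+N_k)^3$, since for odd $n$ it is not automatically of one sign.
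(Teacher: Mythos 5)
You correctly identify the decomposition and the genuine new difficulty of the $n=3$ case (the cube $(M_k+N_k)^3$ has no fixed sign), and your ``option (b)'' --- that the sign of $M_k$ at the sample points is tied to the parity of $m$ --- is indeed the idea the paper uses. However, two concrete errors derail the execution. First, you miscompute $M_k$ at the sample points: $M_k(\theta_{3k}(m)) = 2\cos\bigl(k\,\theta_{3k}(m)/2\bigr) = 2\cos(m\pi/3)$, not $2\cos(2m\pi/(3k))$. So $M_k(\theta_{3k}(m))$ takes only the four discrete values $\{2,1,-1,-2\}$ according to $m \bmod 6$, with even $m$ giving $M_k \in \{2,-1\}$ and odd $m$ giving $M_k \in \{-2,1\}$; the sign always cooperates with $2(-1)^m$, and in the worst case $|M_k|$ equals exactly $1$ --- it is not $O(1/k)$, and there is no ``distance of $2m/(3k)$ to a half-integer'' to analyze. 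Your entire continuous analysis of when $M_k$ is close to $\pm 2$ is built on this miscomputation.

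Second, and fatally for your numerics, you have the monotonicity of $N$ backwards. Since $N_k(\theta) = (2\cos(\theta/2))^{-k}$ and $2\cos(\theta/2)$ decreases from $\sqrt{2}$ to $1$ as $\theta$ runs from $\pi/2$ to $2\pi/3$, the function $N_k$ is \emph{increasing} in $\theta$ (equivalently in $m$), minimized at $\theta = \pi/2$ and approaching $1$ at $\theta = 2\pi/3$. Your claimed bounds $N_k \leq 2^{-k/2}$ and $N_{3k} \leq 2^{-3k/2}$ are therefore false; at the critical odd sample point $m = k-1$ one has $N_k(\theta_{3k}(k-1)) = \bigl(2\cos(\pi/3 - \pi/(3k))\bigr)^{-k} \approx 0.185$ for $k = 16$. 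This is precisely why the worst case gives $(-1)^m M_{3,k} \geq 2 - (1+N_k)^3 - N_{3k} \approx 0.329$ rather than your claimed ``comfortably above $1$'': the paper bounds the odd-$m$ case by $(1+N_k(\theta_{3k}(k-1)))^3 - 2 + N_{3k}(\theta_{3k}(k-1))$, shows this is monotone in $k$ via Lemma 2.2 of the Reitzes--Vulakh--Young paper, and evaluates at $k=16$ to get $-0.32869$. Your proof as written would ``prove'' a bound that is actually false, so the gap is genuine.
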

	
	\begin{proof} We observe that
	$$
	\theta_{3k}(m)= \frac{2m\pi}{3k} \in \left[\frac{\pi}{2}, \frac{2\pi}{3} \right] \Leftrightarrow m \in \mathbb{Z} \cap \left[ \frac{3k}{4}, k \right].
	$$
	Substituting $\theta_{3k}(m)$ into  (\ref{def:Fnkmainterm}), we obtain
$$
	\begin{aligned}
	M_{3,k}(\theta_{3k}(m)) &=  \left( M_k(\theta_{3k}(m)) + N_k(\theta_{3k}(m)) \right)^3 +  2(-1)^m  + N_{3k}(\theta_{3k}(m)).
	\end{aligned}
$$
Assuming $m \in  [3k/4, k]$ is even. Then $M_k(\theta_{3k}(m)) = 2 \cos(m\pi/3)=-1,2$ and 
$$
	\begin{aligned}
	M_{3,k}(\theta_{3k}(m)) &\geq\left(-1+ N_k(\theta_{3k}(m)) \right)^3 +2  +N_{3k}(\theta_{3k}(m)).
	\end{aligned}
$$
By (\ref{lemma1}), for $k \geq 16$ and for even $m \in [3k/4, k]$,  
	\begin{equation}\label{ineq:3klowerbound1}
	M_{3,k}(\theta_{3k}(m)) > (-1 + 2^{-k/2})^3 +2 + 2^{-3k/2} >1. 
	\end{equation}
Assume $m\in  [3k/4, k]$ is odd.  Since $M_k(\theta_{3k}(m)) = 2\cos(m\pi/3)=-2,$ or $1$, 
	\begin{equation}\label{boundmaintermn=3odd}
	\begin{aligned}
	M_{3,k}(\theta_{3k}(m)) \leq \left(1+N_k(\theta_{3k}(m))  \right)^3  -2  +N_{3k}(\theta_{3k}(m)).
	\end{aligned}
	\end{equation}
By (\ref{lemma1}), the right hand side is monotonically increasing as a function of odd number $m \in [3k/4, k]$. Plugging $m = k-1$ in (\ref{boundmaintermn=3odd}), we obtain 
	\begin{equation}\label{boundmaintermn=3oddplugink-1}
	\begin{aligned}
	M_{3,k}(\theta_{3k}(m))  &\leq \left(1+ \left( 2\cos \left( \frac{\pi}{3} - \frac{\pi}{3k} \right) \right)^{-k} \right)^3  -2  +\left( 2\cos \left( \frac{\pi}{3} - \frac{\pi}{3k}\right) \right)^{-3k}.
	\end{aligned}
	\end{equation}
By Lemma 2.2 of \cite{reitzes2017zeros} and the identity $\cos(\theta) = \sin(\pi/2 - \theta)$, the right hand side of (\ref{boundmaintermn=3oddplugink-1}) is monotonically decreasing as a function in $k$. Evaluating $k = 16$ in (\ref{boundmaintermn=3oddplugink-1}), we have that for $k \geq 16$ and for odd $m \in [3k/4, k]$,
\begin{equation} \label{ineq:3klowerbound2}
M_{3,k}(\theta_{3,k}(m)) \leq  -0.32869.
\end{equation}
	Therefore, by (\ref{ineq:3klowerbound1}), and (\ref{ineq:3klowerbound2}), the proof is completed. 
	\end{proof}
	
\subsection{Proof of Theorem \ref{theorem:2k3k}}  
\begin{proof} Recall that the function $\left( E_k^n + E_{nk} \right) (e^{i\theta})$ and the real-valued function $F_{n,k}(\theta) = e^{ink\theta/2} \left( E_k^n + E_{nk} \right) (e^{i\theta})$ have the same zero set on $[\pi/2, 2\pi/3]$ where $F_{n,k}(\theta)$ can be extracted as 
$$
	F_{n,k}(\theta) = M_{n,k}(\theta) + R_{n,k}(\theta),
$$
where Propositions \ref{prop:Fnkmainplusremainderterms}, \ref{prop:lowerboundmaintern=2} and \ref{prop:lowerboundmaintern=3}, showed that for $n = 2, 3$,
$$
	(-1)^m M_{n,k}(\theta_{nk}(m))> |R_{n,k}|
$$
	for large enough $k$ and for $\theta_{nk}(m) = 2m\pi/nk \in [\pi/2, 2\pi/3]$.

Thus, $F_{n,k}(\theta_{nk}(m))$ is strictly positive or negative according as $m$ is even or odd in $[nk/4, nk/3]$. 	
	 Then the intermediate value theorem guarantees that the minimum number of zeros of the function $F_{n,k}(\theta)$ and hence $E_k^n(e^{i\theta}) + E_{nk}(e^{i\theta})$ equals the number of $\theta_{nk}(m)$ in $[\pi/2, 2\pi/3]$ minus 1. 
	
Using the parametrization $k = (12/n)l_n + s_n$ where $s_n \in \{0, 2, \dots, s_n-2\}$,  the number of $\theta_{nk}(m)$ in $[\pi/2, 2\pi/3]$ equals the number of integers in $[nk/4, nk/3] = [3l_n + ns_n/4, 4l_n +ns_n/3]$. For $n =2$ or $3$, it can be shown easily that there are $l_n + 1$ integers in that interval. Hence, we conclude that $(E_k^n + E_{nk})(e^{i\theta})$ has at least $l_n$ zeros on $(\pi/2, 2\pi/3)$

As $E_k^n (\tau) + E_{nk}(\tau)$ can have at most $l_n$ nontrivial zeros in $\mathcal{F} \backslash \{ i, \rho\}$ as described at the beginning of Section 3 and the above argument shows that there are at least $l_n$ zeros on the arc $\mathcal{A}$, we finish the proof of Theorem \ref{theorem:2k3k}. 
\end{proof}


\subsection{Higher values of $n$}
	Computational evidence shows that the result in Theorem \ref{theorem:2k3k} does not extend to $n \geq 4$. When $n = 4, 5$ and $6$, the remainder term $R_{n,k}(\theta)$ is getting bigger than the main term $M_{n,k}(\theta_{nk}(m))$ as the values of $\theta$ get closer and closer $2\pi/3$. It would be very interesting to see what result holds for higher $n$. We leave this an open problem. 

\section{Locating the zeros of $E_k(\tau)E_l(\tau)+E_{k+l}(\tau)$}
Let $k > l \geq 10$ be even integers and consider
$$
E_k(\tau) E_l(\tau) + E_{k+l}(\tau)
$$
By symmetry, we assume that $k >l$ (the case $k=l$ is discussed in Section 3).  This modular form of weight $k +l$ is defined analogously to the cusp form
$$
E_k(\tau) E_l(\tau) - E_{k+l}(\tau).
$$
which appeared in the work of Reitzes et al. in \cite{reitzes2017zeros}. In their paper, they prove that if $k$ and $l$ are sufficiently large, then all zeros of $E_k(\tau) E_l(\tau) - E_{k+l}(\tau)$ lie on the arc $\mathcal{A} = \{e^{i\theta} : \pi/2 \leq \theta \leq 2\pi/3]$ or on the left side boundary $\{ \tau \in \mathcal{F} : \text{Re}(\tau) = -1/2\}$. 

In contrast to their result, we prove  that all zeros of $E_k(\tau) E_l(\tau) + E_{k+l}(\tau)$ are located on the arc $\mathcal{A}$.

We begin by writting  $k+l = 12n + s$ with $n \geq 1$ and $s \in \{0,4,6,8,10,14\}$ and considering the related function 
	\begin{equation}\label{def:Gkl}
	G_{k,l}(\theta) := e^{i(k+l)\theta/2} (E_kE_l + E_{k+l})(e^{i\theta}). 
	\end{equation} 
This function is real on $[\pi/2, 2\pi/3]$ by the Proposition 2.1 of \cite{getz2004generalization}. Also, the zeros of $G_{k,l}(\theta)$ on $[\pi2, 2\pi/3]$ corresponds bijectively to the zeros of $E_k(\tau) E_l(\tau) + E_{k+l}(\tau)$ on the arc $\mathcal{A}$. 

Similar to the method of F.K.C. Rankin and Swinnerton-Dyer reviewed in Section 2, we will show that $G_{k,l}(\theta)$ has at least $n$ zeros on $(\pi/2, 2\pi/3)$.


\subsection{Extraction of the main and error terms}

\begin{proposition}\label{prop:maintermplusremainderterm}  For   even $k > l \geq 10$ and for $\theta \in [\pi/2, 2\pi/3]$, we have 
		$$
		G_{k,l}(\theta) = M_{k,l}(\theta) + R_{k,l}(\theta)
		$$
		where $M_{k,l}(\theta)=(M_k(\theta)+N_k(\theta))(M_l(\theta) + N_l(\theta)) + M_{k+l}(\theta) + N_{k+l}(\theta)$ with $M_k(\theta)$ and $N_k(\theta)$ are defined in (\ref{def:MkNk})
		and $|R_{k,l}(\theta)| \leq 0.39018$. 
\end{proposition}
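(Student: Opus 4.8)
The plan is to mimic the extraction already carried out for $F_{n,k}(\theta)$ in Proposition \ref{prop:Fnkmainplusremainderterms}, only now with two distinct Eisenstein series multiplied together. First I would write $G_{k,l}(\theta) = (F_k F_l + F_{k+l})(\theta)$, where $F_k(\theta) = e^{ik\theta/2}E_k(e^{i\theta})$ is the function from (\ref{def:Fk}). Using the decomposition (\ref{def:maintermofFk}), substitute $F_k = M_k + N_k + R_k$, $F_l = M_l + N_l + R_l$, and $F_{k+l} = M_{k+l} + N_{k+l} + R_{k+l}$, then expand the product $F_k F_l$. The terms not involving any $R$ collect into $M_{k,l}(\theta) = (M_k + N_k)(M_l + N_l) + M_{k+l} + N_{k+l}$, exactly as stated, and everything else is dumped into $R_{k,l}(\theta)$, namely
$$
R_{k,l}(\theta) = (M_k + N_k)R_l + (M_l + N_l)R_k + R_k R_l + R_{k+l}.
$$

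Next I would bound $|R_{k,l}(\theta)|$ by the triangle inequality on $[\pi/2,2\pi/3]$. On this interval $|M_k| = |2\cos(k\theta/2)| \leq 2$ and $|N_k| = |(2\cos(\theta/2))^{-k}| \leq 1$, so $|M_k + N_k| \leq 3$ and likewise $|M_l + N_l| \leq 3$. This gives
$$
|R_{k,l}(\theta)| \leq 3|R_l(\theta)| + 3|R_k(\theta)| + |R_k(\theta)||R_l(\theta)| + |R_{k+l}(\theta)|.
$$
Now invoke the fact recalled from \cite{rankin1970zeros} that the bound in (\ref{def:remaindertermofFk}) is monotonically decreasing in $k$; since $k > l \geq 10$, every one of $|R_k|, |R_l|, |R_{k+l}|$ is bounded by the value of the right side of (\ref{def:remaindertermofFk}) at $k=10$, call it $\beta$, and one computes $\beta \leq 0.3563$ as in the excerpt — actually I would want the sharper numeric value of that expression at $k=10$. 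Plugging in, $|R_{k,l}(\theta)| \leq 6\beta + \beta^2 + \beta$, and checking this evaluates to at most $0.39018$ finishes the proof.

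The only real subtlety, and the step I would be most careful about, is the numerics: the stated constant $0.39018$ is noticeably smaller than the crude $7\beta + \beta^2 \approx 7(0.3563) + 0.127 \approx 2.62$ one gets from the bound $\beta \le 0.3563$. So the argument must instead use the \emph{exact} value of the right-hand side of (\ref{def:remaindertermofFk}) evaluated at $k=10$ (and at $k=l=10$, $k+l=20$ separately for the $|R_{k+l}|$ term, which is even smaller), not the rounded-up $0.3563$. Concretely, $|R_k|$ at $k=10$ is $(1/2)^5 + 4(2/5)^5 + \frac{20\sqrt2}{7}(2/9)^{7/2}$, and one should carry enough digits. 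I would also double-check that replacing $|R_k|$ and $|R_l|$ by their common $k=10$ bound is legitimate — it is, since the bound is decreasing and $k,l \geq 10$ — and that the $|R_{k+l}|$ term is bounded using $k+l \geq 20$. Assembling the pieces with honest decimal arithmetic is the entire content of the proof; there is no conceptual obstacle beyond bookkeeping.
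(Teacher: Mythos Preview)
Your decomposition and the inequality
\[
|R_{k,l}(\theta)| \leq 3|R_l(\theta)| + 3|R_k(\theta)| + |R_k(\theta)||R_l(\theta)| + |R_{k+l}(\theta)|
\]
are exactly what the paper does. The one place you go astray is in the final numerics. You propose to bound both $|R_k|$ and $|R_l|$ by the value of (\ref{def:remaindertermofFk}) at $10$, and $|R_{k+l}|$ by its value at $20$. But the hypothesis is $k > l \geq 10$ with both even, so in fact $k \geq 12$ and $k+l \geq 22$. The paper evaluates the bound at $l=10$, $k=12$, $k+l=22$, and this is what produces $0.39018$.

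Your version would not: with $\beta_{10} \approx 0.0931$ one gets $6\beta_{10} + \beta_{10}^2 + |R_{20}| \approx 0.569$, well above the stated constant. Using instead $\beta_{10}$ for $|R_l|$ and $\beta_{12} \approx 0.0356$ for $|R_k|$ gives $3(0.0931)+3(0.0356)+(0.0931)(0.0356)+|R_{22}| \approx 0.39018$. So the strict inequality $k>l$ is not cosmetic here; it is precisely what makes the constant small enough for the later comparison with the main term to succeed. Everything else in your plan is correct.
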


\begin{proof} By (\ref{def:Gkl}) and (\ref{def:Fk}), we can write 
\begin{equation}\label{def:Gkl=FkFl+Fk+l}
G_{k,l}(\theta) = \left( F_kF_l + F_{k+l} \right) (\theta).
\end{equation}
Plugging in (\ref{def:maintermofFk}) into (\ref{def:Gkl=FkFl+Fk+l}),  we obtain
\begin{equation} \label{eq: proof}
\begin{aligned}
G_{k,l}(\theta)
&=\left( M_k(\theta)+N_k(\theta) \right) \left( M_l(\theta)+N_l(\theta) \right)  + M_{k+l}(\theta) + N_{k+l}(\theta)\\
&\quad + \left( M_k(\theta)+N_k(\theta) \right) R_l(\theta) + \left( M_l(\theta)+N_l(\theta) \right) R_k(\theta)\\
&\quad + R_k(\theta) R_l(\theta) + R_{k+l}(\theta).
\end{aligned}
\end{equation}
Let $M_{k,l}(\theta)$ be a sum of all terms in the first line  in (\ref{eq: proof}) and let $R_{k,l}(\theta)$ be the sum of all remaining terms. To bound $|R_{k,l}(\theta)|$, the triangle inequality along with the fact that $|M_k(\theta)| = |2\cos(k\theta/2)| \leq 2$ and $|N_k(\theta)| = |(2\cos(\theta/2))^{-k}| \leq 1$ on $[\pi/2, 2\pi/3]$ yield
\begin{equation}\label{lowerboundRkl}
|R_{k,l}(\theta)| \leq  3|R_{l}(\theta)| + 3|R_{k}(\theta)|+ |R_{k}(\theta)||R_{l}(\theta)| + |R_{k+l}(\theta)|\\
\end{equation}
With the upper bound of $|R_k(\theta)|$ given in  (\ref{def:remaindertermofFk}),  it is easy to see that $|R_{k,l}(\theta)|$ is also monotonically decreasing in both $k,l$.  Evaluating the bound in (\ref{lowerboundRkl}) at $k = 12$ and $l = 10$, we get the upper bound for $|R_{k,l}(\theta)|$ in Proposition \ref{prop:maintermplusremainderterm}. 
This completes the proof.
\end{proof}


\subsection{Sample points}

Let $k > l \geq 10$ be even integers, and define 
$$
\theta_m := \theta_{k+l}(m) = \frac{2m\pi}{k+l}.
$$
where $m$ ranges over integers so that $\theta_m \in [\pi/2, 2\pi/3]$. We observe that 
$$
\theta_m \in \left[\frac{\pi}{2}, \frac{2\pi}{3} \right] \Leftrightarrow m\in \left[ \frac{k+l}{4}, \frac{k+l}{3}\right].
$$ 

With the definition of $M_k(\theta)$ given in (\ref{def:MkNk}),  $$M_{k+l}(\theta_m) = 2\cos \left( (k+l)\frac{\theta_m}{2} \right) = 2\cos(m\pi)= 2(-1)^m$$ and the sum and difference trigonometric identities give us
$$
M_k(\theta_m) = 2\cos\left( k \frac{\theta_m}{2} \right) = 2\cos \left( (k+l) {\theta_m}{2} - k \frac{\theta_m}{2} \right)= (-1)^m M_l(\theta_m). 
$$
Inserting these in the main term $M_{k,l}(\theta)$ in Proposition \ref{prop:maintermplusremainderterm}, we find that 
$$
\begin{aligned}
M_{k,l}(\theta_m) &= ((-1)^m M_l(\theta_m) + N_k(\theta_m)) (M_l(\theta_m) + N_l(\theta_m)) + 2(-1)^m + N_{k+l}(\theta_m)\\
&= 2(-1)^m + N_k(\theta_m) N_l(\theta_m) + N_{k+l}(\theta_m)   \\
&\quad + (-1)^mM_l^2 \left( \theta_m \right) + (-1)^m M_l \left(\theta_m \right)  \left\{  N_{l}\left( \theta_m \right) + (-1)^m  N_{k}\left(\theta_m \right) \right\}.
\end{aligned}
$$
Since $N_k(\theta) = (2\cos(\theta/2))^{-k}$ is given in (\ref{def:MkNk}), $N_k(\theta)N_l(\theta) = N_{k+l}(\theta)$ and hence we write 
$(-1)^m M_{k,l} (\theta_m) = P_{k,l}(\theta_m) + Q_{k,l}(\theta_m)$
where $P_k(\theta_m)$ and $Q_k(\theta_m)$ are given by 
\begin{equation}\label{def:Pkl}
P_{k,l}(\theta_m) := 2 + 2(-1)^m N_{k+l}\left( \theta_m \right),
\end{equation}
and 
\begin{equation}\label{def:Qkl}
Q_{k,l}(\theta_m) := M_l^2 \left( \theta_m \right) + M_l \left(\theta_m \right)  \left\{  N_{l}\left( \theta_m \right) + (-1)^m  N_{k}\left(\theta_m \right) \right\}.
\end{equation}

Our goal of the rest of this section is to show that for all $\theta_m \in [\pi/2, 2\pi/3]$, $G_{k,l}(\theta_m)$ has different signs for consecutive integers $m$'s in $[(k+l)/4, (k+l)/3]$ by proving that a lower bound of $(-1)^m M_{k,l}(\theta_m)$ is greater than the upper bound of $|R_{k,l}(\theta)|$ given in Proposition \ref{prop:maintermplusremainderterm}. 


\subsection{Lower bound of $P_{k,l}(\theta_m)$} Since bounding $(-1)^m M_{k,l}(\theta_m)$ is equivalent to bounding $P_{k,l}(\theta_m)$ and $Q_{k,l}(\theta_m)$, let us first begin by giving a lower bound for $P_{k,l}(\theta_m)$. 

\begin{proposition} \label{prop:LowerboundPkl} For even integers $k > l \geq 10$, and for $\theta_m\in [\pi/2, 2\pi/3]$,  
	$$
	P_{k,l}(\theta_m) \geq  \left\{ \begin{array}{rl}
	1.98222 &\mbox{ if $k+l \equiv 0 \pmod 6,$} \\
1.99970 &\mbox{ if $k+l \equiv 2\pmod 6,$} \\
1.64160 &\mbox{ if $k+l \equiv 4 \pmod 6.$} \\
\end{array} \right.
	$$
\end{proposition}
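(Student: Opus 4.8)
The plan is to mimic the structure of the proof of Proposition \ref{prop:lowerboundmaintern=2}, since $P_{k,l}(\theta_m) = 2 + 2(-1)^m N_{k+l}(\theta_m)$ has exactly the same shape as the quantity bounded there, with $k+l$ playing the role of $2k$. First I would note that since $N_{k+l}(\theta_m) = (2\cos(\theta_m/2))^{-(k+l)}$ is positive on $[\pi/2,2\pi/3]$, the only danger is when $(-1)^m = -1$, i.e.\ when $m$ is odd; for even $m$ we trivially have $P_{k,l}(\theta_m) \geq 2$. So I may assume $m$ is odd and reduce to showing
$$
P_{k,l}(\theta_m) = 2 - 2N_{k+l}(\theta_m) \geq 2 - 2\left(2\cos\left(\frac{m\pi}{k+l}\right)\right)^{-(k+l)}
$$
is large. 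Because $N_{k+l}(\theta_m)$ is decreasing in $m$ on the relevant range (this is exactly the derivative computation recorded around (\ref{lemma1}), applied with $k+l$ in place of $nk$ — note $k+l$ is even since $k,l$ are), the right-hand side is minimized at the largest odd integer $m_{odd} \in [(k+l)/4, (k+l)/3]$.

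Next I would pin down $m_{odd}$ in terms of the residue of $k+l$ modulo $6$. Writing $k+l \equiv r \pmod 6$ with $r \in \{0, 2, 4\}$ (equivalently $r \in \{0, \pm 2\}$), the largest odd integer $\le (k+l)/3$ is
$$
m_{odd} = \frac{k+l}{3} - \frac{3-r}{3}
$$
for a suitable interpretation of $r$ in each residue class, exactly paralleling (\ref{def:mstar}). Substituting this into the bound above and using $\cos(m_{odd}\pi/(k+l)) = \cos(\pi/3 - (3-r)\pi/(3(k+l)))$ gives a lower bound of the form $2 - 2(2\cos(\pi/3 - c_r/(k+l)))^{-(k+l)}$ for an explicit constant $c_r$ depending on the residue class. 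By Lemma 2.2 of \cite{reitzes2017zeros} together with $\cos\theta = \sin(\pi/2 - \theta)$, this expression is monotonically increasing in $k+l$, so I evaluate it at the smallest admissible value of $k+l$ in each residue class: with $k > l \geq 10$ the smallest sum is $k+l = 22$ (giving residue $4$), $k+l = 24$ (residue $0$), and $k+l = 26$ (residue $2$), respectively. Plugging these in yields the three numerical constants $1.98222$, $1.99970$, $1.64160$ claimed in the statement.

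The one genuinely fiddly point — and the main obstacle — is justifying the monotonicity of the right-hand side in $k+l$ while $m_{odd}$ itself depends on $k+l$: the argument of $N_{k+l}$ is not a fixed angle but drifts as $k+l$ grows. This is precisely why the reduction to $\cos(\pi/3 - c_r/(k+l))$ is carried out first, so that the dependence on $k+l$ is isolated into the single clean form $(2\cos(\pi/3 - c_r/(k+l)))^{-(k+l)}$, to which Lemma 2.2 of \cite{reitzes2017zeros} applies verbatim. I would also double-check the boundary bookkeeping: one must verify that each residue class actually contains an odd integer in $[(k+l)/4,(k+l)/3]$ for all $k+l$ under consideration (for small $k+l$ the interval is short, so this needs a quick check), and that the formula for $m_{odd}$ is correct in each of the three cases — a routine but error-prone case analysis modulo $6$. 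Once those are in place, the three evaluations are immediate and the proposition follows by taking the minimum over the residue classes.
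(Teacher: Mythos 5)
Your proposal follows the paper's proof essentially verbatim: reduce to odd $m$, bound $P_{k,l}$ below by its value at the largest odd $m_{odd}\in[(k+l)/4,(k+l)/3]$ determined by $k+l \bmod 6$, invoke Lemma 2.2 of \cite{reitzes2017zeros} for monotonicity in $k+l$, and evaluate at $k+l=24,26,22$ to get the three constants. The only quibble is that $N_{k+l}(\theta_m)=(2\cos(m\pi/(k+l)))^{-(k+l)}$ is \emph{increasing} in $m$ on this range (which is exactly what pushes the worst case to the largest odd $m$), not decreasing as you parenthetically claim.
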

	
\begin{proof} Let $k > l\geq 10$ be even integers. By (\ref{def:Pkl}),  $P_{k,l}(\theta_m)$ is given by 
	$$
	P_{k,l}(\theta_m) = 2+2(-1)^m N_{k+l}(\theta_m).
	$$
Applying the same argument discussed in (\ref{lemma1}) from the proof of Theorem \ref{theorem:2k3k}, $N_{k+l}(\theta_m)$ is positive and also monotonically increasing as a function of $m  \in [(k+l)/4, (k+l)/3]$. This implies that 
	\begin{equation}\label{ineq:Pkl}
	 P_{k,l}(\theta_m)  
	\geq 2-  2\left( 2\cos \left( \frac{m_{odd}\pi}{k+l} \right) \right)^{-(k+l)}
	\end{equation}
	where $m_{odd}$ denotes the largest odd number in $[(k+l)/4, (k+l)/3]$. Using the notation $k+l = 6q + r$ with $q \in \mathbb{N}$ and $r \in \{ -2, 0 ,2\}$,  a simple calculation reveals that 
	$
	m_{odd} = (k+l)/3- (3+r)/3. 
	$
Inserting this value into (\ref{ineq:Pkl}) to obtain
	$$
	 P_{k,l}(\theta_m) \geq 2 -2\left( 2\cos\left( \frac{\pi}{3} -\left( \frac{3+r}{3} \right) \frac{\pi}{k+l} \right) \right)^{-(k+l)}. 
	$$
By Lemma 2.2 of \cite{reitzes2017zeros} and the identity $\cos(\theta) = \sin(\pi/2 - \theta)$, the right hand side is monotonically increasing as a function in $k+l$. Hence, for $k+l \equiv 0 \pmod 6$, and $k+l \geq 24$, 
	\begin{equation}\label{ineq:Pkl=0}
	P_{k,l}(\theta_m)  \geq 2 -2\Big( 2\cos\left( \frac{\pi}{3} - \frac{\pi}{24} \right) \Big)^{-(24)} \geq 1.98222.
	\end{equation}
By a similar argument, we have that for $k+l \equiv 2 \pmod 6$, and $k+l \geq 26$,
	\begin{equation}\label{ineq:Pkl=2}
	 P_{k,l}(\theta_m)  \geq 2 -2\Big( 2\cos\left( \frac{\pi}{3} - \left(\frac{5}{3}\right)\frac{\pi}{26} \right) \Big)^{-(26)} \geq 1.99970,
	\end{equation}
	and for $k+l \equiv -2 \pmod 6$, and $k+l \geq 22$,
	\begin{equation}\label{ineq:Pkl=-2}
	P_{k,l}(\theta_m) \geq 2 -2\Big( 2\cos\left( \frac{\pi}{3} - \left(\frac{1}{3}\right)\frac{\pi}{22} \right) \Big)^{-(22)} \geq 1.64160.	
	\end{equation}
	By (\ref{ineq:Pkl=0}), (\ref{ineq:Pkl=2}) and (\ref{ineq:Pkl=-2}), we finish the proof.
	\end{proof}


\subsection{Lower bound of $Q_{k,l}(\theta_m)$}
We now turn to bounding $Q_{k,l}(\theta_m)$. 
	\begin{proposition} \label{prop:LowerboundQkl} For even integers $k > l \geq 10$, and for $\theta_m \in [\pi/2, 2\pi/3]$, 
	$$
	Q_{k,l}(\theta_m) \geq  \left\{ \begin{array}{ll}
												-0.31566 &\mbox{ if $l \equiv 0 \pmod 6$,} \\
												-1 &\mbox{ otherwise.} \\
												\end{array} \right.
	$$
\end{proposition}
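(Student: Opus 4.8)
The plan is to regard $Q_{k,l}(\theta_m)$ as a quadratic polynomial in the single real quantity $M_l(\theta_m)$ and to combine the trivial lower bound for such a quadratic with the observation that, in the case $l\equiv 0\pmod 6$, $M_l$ is forced to be positive on exactly the part of $[\pi/2,2\pi/3]$ where the auxiliary terms $N_l$ and $N_k$ fail to be small.

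First I would rewrite (\ref{def:Qkl}) in the factored form
\[
Q_{k,l}(\theta_m)=M_l(\theta_m)\Bigl(M_l(\theta_m)+N_l(\theta_m)+(-1)^mN_k(\theta_m)\Bigr).
\]
Viewed as $x^2+cx$ in $x=M_l(\theta_m)$ with $c=N_l(\theta_m)+(-1)^mN_k(\theta_m)$, this satisfies $Q_{k,l}(\theta_m)\ge -c^2/4$. On $[\pi/2,2\pi/3]$ one has $2\cos(\theta/2)\ge 1$, hence $0<N_k(\theta)\le N_l(\theta)\le 1$ (the inequality $N_k\le N_l$ holding because $k>l$), so $|c|\le N_l(\theta_m)+N_k(\theta_m)\le 2$ and therefore $Q_{k,l}(\theta_m)\ge -1$ unconditionally. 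This settles the ``otherwise'' case.

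For the sharper bound when $l\equiv 0\pmod 6$, the point I would exploit is that $M_l(\theta)=2\cos(l\theta/2)$ while $l\pi/3$ is an integer multiple of $2\pi$: thus $M_l(2\pi/3)=2$, and more generally $M_l(\theta)>0$ for every $\theta\in(2\pi/3-\pi/l,\,2\pi/3]$. From the factored form, $Q_{k,l}(\theta_m)<0$ can occur only when $M_l(\theta_m)<0$ (if $M_l(\theta_m)\ge 0$ the second factor is at least $M_l(\theta_m)+N_l(\theta_m)-N_k(\theta_m)\ge 0$, using $N_l\ge N_k$), and $M_l(\theta_m)<0$ forces $\theta_m\le 2\pi/3-\pi/l$, i.e. $2\cos(\theta_m/2)\ge 2\cos\bigl(\pi/3-\pi/(2l)\bigr)$. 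Since $\theta\mapsto N_l(\theta)$ is increasing on $[\pi/2,2\pi/3]$, in that regime both $N_l(\theta_m)$ and $N_k(\theta_m)$ are at most $\bigl(2\cos(\pi/3-\pi/(2l))\bigr)^{-l}$, and substituting into $Q_{k,l}(\theta_m)\ge -\bigl(N_l(\theta_m)+N_k(\theta_m)\bigr)^2/4$ yields
\[
Q_{k,l}(\theta_m)\ \ge\ -\bigl(2\cos\bigl(\pi/3-\pi/(2l)\bigr)\bigr)^{-2l}.
\]

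It remains to show that the right-hand side is increasing in $l$ on the relevant range $l\ge 12$ (the smallest even multiple of $6$ that is $\ge 10$), so that a single numerical evaluation at $l=12$ completes the proof; equivalently, that $l\mapsto\bigl(2\cos(\pi/3-\pi/(2l))\bigr)^{-2l}$ is decreasing. After rewriting $\cos\bigl(\pi/3-\pi/(2l)\bigr)=\sin\bigl(\pi/6+\pi/(2l)\bigr)$, this is exactly the sort of monotonicity supplied by Lemma 2.2 of \cite{reitzes2017zeros}, applied as in the proofs of Propositions \ref{prop:lowerboundmaintern=2}, \ref{prop:lowerboundmaintern=3}, and \ref{prop:LowerboundPkl}. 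I expect this monotonicity-in-$l$ verification to be the only genuine obstacle; the remainder is the elementary quadratic estimate together with the trigonometry locating the sign of $M_l$ near $\theta=2\pi/3$.
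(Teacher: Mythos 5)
Your proof is correct, and it takes a genuinely different --- and in places sharper --- route than the paper. The paper also begins from the factorization $Q_{k,l}(\theta_m)=M_l(\theta_m)\bigl(M_l(\theta_m)+N_l(\theta_m)+(-1)^mN_k(\theta_m)\bigr)$ and the observation that $Q_{k,l}\ge 0$ whenever $M_l\ge 0$, but from there it discards the $M_l^2$ term and uses the linear estimate $M_l\,c\ge -2c$ (writing $c=N_l+(-1)^mN_k$ and using $M_l\ge -2$); this forces a three-way case analysis on $l\bmod 6$ and, for $l\equiv 2,4\pmod 6$, a further split of $[\pi/2,2\pi/3]$ into a region away from $2\pi/3$ where $c$ is small and a region near $2\pi/3$ where the crude bound $M_l^2+2M_l\ge -1$ is invoked. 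Your completing-the-square estimate $M_l^2+cM_l\ge -c^2/4$ together with $|c|\le 2$ delivers the $-1$ bound in one uniform stroke with no case analysis, and in the case $l\equiv 0\pmod 6$ it yields $Q_{k,l}(\theta_m)\ge -\bigl(2\cos(\pi/3-\pi/(2l))\bigr)^{-2l}\approx -0.0089$ at $l=12$, which is quadratically better than the paper's $-0.31566$ (obtained from $-2\bigl\{N_l(\theta^\ast)+N_{l+2}(\theta^\ast)\bigr\}$ at the same point $\theta^\ast=2\pi/3-\pi/l$) and of course implies the stated inequality. The one ingredient you flag as a potential obstacle --- that $l\mapsto\bigl(2\cos(\pi/3-\pi/(2l))\bigr)^{-2l}$ is decreasing --- is exactly the monotonicity the paper itself extracts from Lemma 2.2 of \cite{reitzes2017zeros} via $\cos(\pi/3-\pi/(2l))=\sin(\pi/6+\pi/(2l))$ in the proofs of Propositions \ref{prop:lowerboundmaintern=2} and \ref{prop:LowerboundPkl}, so it is available verbatim; and even so, a single evaluation at $l=12$ already lands far above the target $-0.31566$. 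The only thing the paper's finer analysis buys that yours does not is sharper intermediate constants for $l\equiv 2,4\pmod 6$ on the subinterval away from $2\pi/3$, which are never used in the proof of Theorem \ref{theorem:k+l}.
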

	
\begin{proof} Let $k > l \geq 10$ be even integers. By (\ref{def:Qkl}), $Q_{k,l}(\theta_m)$ is given by 
 	$$
	Q_{k,l}(\theta_m) = M_l^2 \left( \theta_m \right) + M_l \left(\theta_m \right)  \left\{  N_{l}\left( \theta_m \right) + (-1)^m  N_{k}\left(\theta_m \right) \right\}.
	$$
	Since $N_k(\theta_m) = (2\cos(\theta_m/2))^{-k} < (2\cos(\theta_m/2))^{-l} = N_l(\theta_m)$, the term in curly brackets is always positive. This follows that   $Q_{k,l}(\theta_m) \geq 0$ if $M_l(\theta_m) \geq 0$ and thus we have the desired bound in this case. 
	
	For the rest of the proof, we may assume that $\theta_m \in [\pi/2, 2\pi/3]$ for which $M_l(\theta_m) <0$. First assume  $l\equiv 0 \pmod 6$. By the definition of $Q_{k,l}(\theta_m)$ given above, $ -2 \leq M_k(\theta_m) <0$, $(-1)^m N_k(\theta_m) < N_{l+2}(\theta_m)$, and $N_l(\theta_m)$ is monotonically increasing by (\ref{lemma1}), we obtain 
		\begin{equation}\label{MinQkl=0mod6}
		\begin{aligned}
		Q_{k,l}(\theta_m) &> M_k(\theta_m)  \left\{  N_{l}\left(\theta_m \right) + (-1)^m N_{k}\left(\theta_m \right) \right\}\\
		 &\geq  -2 \left\{  N_{l}\left( \theta^\ast \right) +  N_{l+2}\left(\theta^\ast \right) \right\}
		\end{aligned}
		\end{equation}
where $\theta^\ast$ denotes the largest $\theta$ value in $[\pi/2, 2\pi/3]$ satisfying $M_l(\theta) \leq 0$. By
the aid of Mathematica, we find that 
$$
\theta^\ast = \frac{2\pi}{3} - \frac{\pi}{l}.
$$ 
Inserting this into the right hand side of (\ref{MinQkl=0mod6}), we have that 
	$$
	Q_{k,l}(\theta_m) \geq - 2 \left\{  \left( 2\cos \left( \frac{\pi}{3} - \frac{\pi}{2l}\right)\right)^{-l} +\left( 2\cos \left( \frac{\pi}{3} - \frac{\pi}{2l}\right)\right)^{-(l+2)} \right\}.
	$$
By Lemma 2.2 in \cite{reitzes2017zeros} and the identity $\cos(\theta) = \sin(\pi/2 -\theta)$,  the right hand side is monotonically increasing in $l$. In this case, we find that for $l\equiv 0 \pmod 6$ and $l \geq 12$,
	$$
	Q_{k,l}(\theta_m) \geq - 2\left\{ \left( 2\cos\left( \frac{\pi}{3} - \frac{\pi}{24}\right) \right)^{-12}  + \left( 2\cos\left( \frac{\pi}{3} - \frac{\pi}{24} \right) \right)^{-14}   \right\}= - 0.31566.
	$$
		
		Next, we suppose $l \equiv 2 \pmod 6$ and first consider $\theta_m \in [\pi/2, 2\pi/3 - \pi/3l]$. 
In this case, $Q_{k,l}(\theta_m)$ is bounded above the same way as  the previous case of  $l \equiv 0\pmod 6$. In fact, we find that 
\begin{equation}\label{MinQkl=2mod6}
		Q_{k,l}(\theta_m) \geq -2 \left\{  N_{l}\left( \theta^\ast \right) +  N_{l+2}\left(\theta^\ast \right) \right\}
		\end{equation}
where $\theta^\ast$ now denotes the largest $\theta$ value in $[\pi/2, 2\pi/3 - \pi/3l]$ satisfying $M_l(\theta) \leq 0$. By
the aid of Mathematica,  
$$
\theta^\ast = \frac{2\pi}{3} - \frac{7\pi}{3l}.
$$
	Combining this with (\ref{MinQkl=2mod6}), we find that for $l\equiv 2  \pmod 6$ and $l \geq 14$,
$$
		Q_{k,l}(\theta_m) \geq - 2\left\{ \left( 2\cos\left( \frac{\pi}{3} - \frac{7\pi}{84} \right) \right)^{-14}  + \left( 2\cos\left( \frac{\pi}{3} - \frac{7\pi}{84} \right) \right)^{-16}   \right\}  = - 0.02344. 
$$

We finish the case $l\equiv 2 \pmod 6$ by considering the $\theta_m$ values such that $\theta_m\in(2\pi/3 - \pi/3l,2\pi/3]$. In this case, the negative value of $M_l(\theta_m)$ and the fact that the term in curly brackets of (\ref{def:Qkl}) lies in $(0,2]$ yield
\begin{equation} \label{approxQkl=2}
		Q_{k,l}(\theta_m) \geq M_l^2(\theta_m) + 2 M_l(\theta_m) = 4\cos^2 \left( l \frac{\theta_m}{2} \right) + 4\cos\left( l \frac{\theta_m}{2} \right). 
\end{equation}
Considering the right hand side as a function in $\theta \in (2\pi/3 - \pi/3l,2\pi/3]$, it is straightforward to check that its derivative is negative and thus it is decreasing on that interval and hence takes a minimal value at $\theta = 2\pi/3$. Thus, in this case
	$$
	Q_{k,l}(\theta_m) \geq M_l^2\left( \frac{2\pi}{3} \right) + 2M_l\left( \frac{2\pi}{3} \right) = -1. 
	$$

Finally, assume that $l \equiv 4 \pmod 6$.  We start with considering all $\theta_m$ values that are away from $2\pi/3$, say $\theta_m  \in [ \pi/2, 2\pi/3 - 2\pi/3l]$. Analysis similar to that in the proof of the previous case shows that for $l \equiv 4  \pmod 6$ and $ l \geq 10$, 
$$
\begin{aligned}
Q_{k,l}(\theta_m) 
		&\geq - 2\Big\{ \Big( 2\cos\big( \frac{\pi}{3} - \frac{\pi}{30} \big) \Big)^{-10}  + \Big( 2\cos\big( \frac{\pi}{3} - \frac{\pi}{30} \big) \Big)^{-12}   \Big\}  \\
		&= - 0.68390. 
\end{aligned}
$$
Now suppose that $\theta_m \in (2\pi/3 - 2\pi/3l, 2\pi/3]$.  In this case, using the same reasoning as in (\ref{approxQkl=2}) gives us
$$
Q_{k,l}(\theta_m) \geq M_l^2(\theta_m) + 2 M_l(\theta_m) \geq M_l^2\left( \frac{2\pi}{3} \right) + 2M_l\left( \frac{2\pi}{3} \right) = -1. 
$$
We finish the proof of Proposition \ref{prop:LowerboundQkl}.
\end{proof}


\subsection{Proof of Theorem \ref{theorem:k+l}}
\begin{proof} By Proposition \ref{prop:maintermplusremainderterm}, the function $G_{k,l}(\theta) = e^{i(k+l)\theta/2}(E_kE_l + E_{k+l})(e^{i\theta})$ can be written as
$$
G_{k,l}(\theta) = M_{k,l}(\theta) + R_{k,l}(\theta)
$$
where Propositions \ref{prop:LowerboundPkl} and \ref{prop:LowerboundQkl} showed for even integers $k > l \geq 10$, and for $\theta_m = 2m\pi/(k+l) \in [\pi/2, 2\pi/3]$,  
	$$
	(-1)^mM_{k,l}(\theta_m) \geq  \left\{ \begin{array}{rl}
												1.32594 &\mbox{ if $l \equiv 0 \pmod 6,$} \\
												0.64161 &\mbox{ otherwise.} \\
												\end{array} \right.
	$$

Comparing this with the upperbound of $|R_{k,l}(\theta)|$ given in Proposition \ref{prop:maintermplusremainderterm}, we find that $$(-1)^mM_{k,l}(\theta_m) > |R_{k,l}|$$ and hence the function 
 $G_{k,l}(\theta_m)$ has different signs for consecutive $m$'s integers so that $\theta_m \in [\pi/2, 2\pi/3]$. 
 
 Thus, the intermediate value theorem guarantees that the number of zeros of $G_{k,l}(\theta)$ and hence $\left(E_kE_l + E_{k+l}\right)(e^{i\theta})$ in $(\pi/2, 2\pi/3)$ is at least the number of $\theta_m$ in $[\pi/2, 2\pi/3]$ minus 1. With the notation $k + l = 12n + s$ and $s \in \{ 0,4,6,8,10,14\}$, considering all 6 cases,  a straightforward counting argument shows that there are $n+1$ of $\theta_m$ values in that interval. 
	
	Since $E_k(\tau)E_l(\tau)+E_{k+l}(\tau)$ has at most $n$ zeros by the valence formula (\ref{valenceformula}), and the above argument shows that there are at least $n$ zeros on the arc $\mathcal{A}$,  we have located all zeros of $E_k(\tau)E_l(\tau)+E_{k+l}(\tau)$ in the fundamental domain lie on the arc $\mathcal{A} = \{ e^{i\theta} : \pi/2\leq \theta \leq 2\pi/3\}$. 
\end{proof}

\section{Acknowledgment}
The author wishes to express his gratitude to Professor Holly Swisher for her support and valuable feedback. This paper is part of the author's Ph.D. dissertation, written under the supervision of Professor Holly Swisher at Oregon State University.  

\section*{References}

\bibliography{zeroskl}

\end{document}